\newtheorem{theorem}{Theorem}
\newtheorem{lemma}[theorem]{Lemma}
\newtheorem{proposition}[theorem]{Proposition}
\newtheorem{corollary}[theorem]{Corollary}
\theoremstyle{definition}
\newtheorem{definition}[theorem]{Definition}
\theoremstyle{remark}
\newcommand{\R}{\mathbb{R}}
\newcommand\per{\textup{per}}            
\newcommand\bdd{\textup{b}}              
\newcommand{\patm}{P_{\textup{atm} }}    
\newcommand{\cm}{\mathscr C}             
\newcommand{\cms}{\mathscr C^*}          
\newcommand{\w}{\mathscr W}              
\newcommand{\qt}{\mathscr Q}             
\newcommand{\K}{\mathscr K}
\newcommand{\trough}{(L,\eta(L))}        
\newcommand{\crest}{(0,\eta(0))}         
\newcommand{\lc}{\lambda_{\textup{cr} }} 
\newcommand\pa{\partial}
\newcommand\loc{{\textup{loc}}}
\DeclareMathOperator{\re}{\mathrm{Re}}   
\renewcommand{\Re}{\re}
\newcommand\mx[1]{#1_{\textup{max} }} 
\newcommand\mn[1]{#1_{\textup{min} }} 
\newcommand{\abs}[2][]{#1\lvert #2 #1\rvert}
\title{Bound on the slope of steady water waves with favorable vorticity}
\author{
  Walter A.~Strauss\thanks{Research supported in part by NSF grant DMS-1007960.}
  \and
  Miles H.~Wheeler\thanks{Research supported in part by NSF grant DMS-1400926.}
}
\date{February 20, 2015}
\begin{document}
\maketitle

\begin{abstract}
  We consider the angle $\theta$ of inclination (with respect to the
  horizontal) of the profile of a steady 2D inviscid symmetric
  periodic or solitary water wave subject to gravity.  Although
  $\theta$ may surpass $30^\circ$ for some irrotational waves close to
  the extreme wave, Amick \cite{amick:bounds} proved that for any
  irrotational wave the angle must be less than 31.15$^\circ$. Is the
  situation similar for periodic or solitary waves that are not
  irrotational? The extreme Gerstner wave has infinite depth, adverse
  vorticity and vertical cusps ($\theta = 90^\circ$).  Moreover,
  numerical calculations show that even waves of finite depth can
  overturn if the vorticity is adverse.   In this paper, on the other
  hand,  we prove an upper bound of 45$^\circ$ on $\theta$ for a large
  class of waves with favorable vorticity and finite depth.  In
  particular, the vorticity can be any constant with the favorable
  sign.  We also prove a series of general inequalities on the
  pressure within the fluid, including the fact that any overturning
  wave must have a pressure sink.  
\end{abstract}

\section{Introduction} \label{sec:intro}

The celebrated ``extreme Stokes wave" has angle (with respect to the
horizontal) exactly 30$^\circ$ at its crest, as originally conjectured
by Stokes himself \cite{stokes}. This is the limiting wave, singular
at its crest, of the curve $\K$ of irrotational waves that bifurcates
from a trivial (flat laminar) wave.  However, it was a surprise when
numerical calculations \cite{cs:numericalnew,sm:limiting} indicated
that the angle $\theta$ may surpass $30^\circ$ for some waves on $\K$
that are very close to the extreme wave.  This fact was subsequently
proven by McLeod in \cite{mcleod:stokes}.  The maximum angle does not
occur at the crest but very close to it.  In a remarkable paper
\cite{amick:bounds} Amick proved that for any irrotational wave the
angle must be less than 31.15$^\circ$.

For waves that are not irrotational, there are no known analytical
bounds on $\theta$, even 90$^\circ$. Indeed, with adverse vorticity,
the crests of Gerstner's explicit waves in deep water can have cusps
with a 90$^\circ$ angle, the extreme case being a cycloid, and numerical
calculations in finite depth show that waves can be quite steep and
even overturn \cite{sp:steep, KS, VO}. In all of these cases, the
vorticity is \emph{adverse} (positive in our formulation). On the
other hand, the main purpose of the present paper is to prove an upper
bound of 45$^\circ$ for a large class of waves with {\it favorable}
vorticity (negative  in our formulation) in finite depth. The
waves to which our bound applies include at least a large part of the
well-known bifurcating curve.  

We denote the velocity by $(U,V)$, the vorticity by $\omega=V_x-U_y$,
and the (relative) stream function by $\psi$, where $\psi_y=U-c,\
\psi_x=V$. The vorticity $\omega$ depends only on the stream function,
$\omega(x-ct,y)=\gamma(\psi(x-ct,y))$. By a streamline we mean a level
curve of $\psi$, i.e.~a particle path {\it in a frame moving with the wave}.
Our main result, somewhat informally stated, is as follows.

\begin{theorem} \label{Theorem1} 
  Let $\cm$ be a connected set (containing a trivial wave) of
  symmetric periodic finite-depth water waves traveling at speed $c$
  with a single crest and trough per period for which $U<c$
  (non-stagnation) and for which the streamlines (in the moving frame) are strictly
  decreasing from crest to trough. We assume that $\gamma\le0,\
  \gamma'\le0,\ \gamma''\le0$.   At least until $(U-c)\gamma=g$ at the
  troughs, the waves in $\cm$ that bifurcate from a trivial wave have
  angle strictly less than $45^\circ$. (The trivial wave satisfies
  $(U-c)\gamma<g$ everywhere.) 
 
  The same statement is true for symmetric solitary waves (instead of
  periodic waves) at least until $(U_\infty-c)\gamma=g$ on the
  surface, where $U_\infty(y)=\lim_{x\to\pm\infty} U(x,y)$.  
\end{theorem}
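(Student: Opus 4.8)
The plan is to recast everything in terms of the stream function and to drive the slope bound with the maximum principle. Writing the vorticity equation as $-\Delta\psi=\gamma(\psi)$, with $\psi$ constant on the flat bottom and on the free surface and the dynamic condition in the Bernoulli form $\tfrac12|\nabla\psi|^2+g\eta=\mathrm{const}$ on the surface, I would first record the interior Bernoulli identity $\tfrac12|\nabla\psi|^2+gy+P+\Gamma(\psi)=\mathrm{const}$, where $\Gamma'=\gamma$; this expresses the pressure through $\psi$ and is what will couple the slope to $P$. Since the surface is a streamline, its slope equals $V/(U-c)$, so $\theta<45^\circ$ is equivalent to $|V|<c-U$, i.e. to the two inequalities $(U-c)+V<0$ and $(U-c)-V<0$. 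Differentiating the vorticity equation shows that each velocity component, hence each combination $(U-c)\pm V$, solves the linearized equation $\Delta w+\gamma'(\psi)\,w=0$; because $\gamma'\le0$ this operator obeys the maximum principle, which is the engine of the whole argument.

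By symmetry I would work on a half period, bounded by the crest line, the trough line, the flat bottom, and the surface. On all three of the first boundaries $V=0$, so there $(U-c)\pm V=U-c<0$ by non-stagnation. The assumption that the streamlines descend monotonically from crest to trough forces $V>0$ on the surface in between; applying the minimum principle to $V$, which vanishes on the other three sides, then gives $V>0$ throughout the interior. Consequently $(U-c)-V<0$ holds automatically, and the entire theorem reduces to the single inequality $(U-c)+V<0$, equivalently $\theta<45^\circ$ along the surface.

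To prove this I would argue by continuation along $\cm$. At the trivial wave $\theta\equiv0$, and the inequality is strict; if it failed somewhere on $\cm$ before the threshold, there would be a first wave at which $\theta=45^\circ$ is attained. Since $(U-c)+V$ is a subsolution that is strictly negative on the bottom and the crest and trough lines, the strong maximum principle places the touching point $p_0$ in the interior of the surface, where the tangential derivative vanishes and, by the Hopf lemma, the outward normal derivative is strictly positive. Here lies the main obstacle: one computes that at such a $45^\circ$ critical point the outward normal derivative of $(U-c)+V$ equals $g/\kappa>0$ purely from the dynamic condition (with $\kappa=|\nabla\psi|$ the surface speed), which is merely \emph{consistent} with Hopf, so the bare linear combination cannot detect the bound and the vorticity must enter. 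The resolution is to test not $(U-c)+V$ itself but a refined quantity coupling it to the pressure through the interior Bernoulli identity and the normal--momentum balance $\pa_\nu P=-\kappa^2\,\pa_s\theta-g\cos\theta$ at the surface (with $\nu$ the outward normal and $s$ arclength). The hypotheses $\gamma\le0$ and $\gamma''\le0$, together with the trough threshold $g-(U-c)\gamma>0$, are exactly what pin the sign of the normal derivative of this quantity---equivalently, what forbid a pressure sink at $p_0$---producing the contradiction with Hopf. Constructing this auxiliary function, and checking that it is a subsolution with the right boundary behaviour while $\gamma''\le0$ preserves the sign along $\cm$, is the crux of the proof.

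For solitary waves I would replace the crest and trough lines by the asymptotics $U\to U_\infty(y)$, $V\to0$ as $x\to\pm\infty$, so that $(U-c)\pm V\to U_\infty-c<0$ at infinity, and run the same maximum-principle and Hopf argument on the resulting unbounded strip, with the threshold now read off from $(U_\infty-c)\gamma=g$ on the surface. The only extra point is to justify the maximum principle on the unbounded domain using the decay of $V$ and the known asymptotics of $\psi$, after which the continuation argument proceeds verbatim.
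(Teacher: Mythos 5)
Your proposal sets up the right framework (half\hyphen period, $v>0$, reduction of the $45^\circ$ bound to the single inequality $u+v<0$ where $u=U-c$, $v=V$, and a continuation along $\cm$), but it stops exactly where the theorem's difficulty begins, and you say so yourself: at the touching point $p_0$ on the surface the Hopf lemma applied to $u+v$ is \emph{consistent} with the boundary computation rather than contradictory, and you then posit, without constructing it, a ``refined quantity coupling $(U-c)+V$ to the pressure'' whose sign would be pinned by $\gamma\le 0$, $\gamma''\le 0$ and the trough threshold. That auxiliary quantity is the entire content of the proof; naming it as ``the crux'' does not supply it, and it is not clear that any pressure-corrected version of $u+v$ works --- note that $u\pm v$ solves the too-clean equation $\Delta w+\gamma'(\psi)w=0$, in which $\gamma''$ never appears, so your candidate function cannot see the hypothesis $\gamma''\le0$ at all. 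A secondary gap: ``the first wave at which $\theta=45^\circ$ is attained'' presumes $\cm$ is a curve, whereas it is only a connected set; one needs a genuine open/closed argument (the paper runs it on the traces $\tau(\cm)$), and the limiting wave need not realize the degenerate angle at any point, so the touching-point picture itself requires justification.

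The paper's actual mechanism is different in a way your sketch cannot reach: it controls $u_x$, not $u+v$. Lemma~\ref{lem:easy} shows $u_x<0$ in $\Omega^+\cup S^+$ already implies $\abs{v/u}<1$, via the streamline identity $uf_x+vf_y=(u^2+v^2)u_x-\gamma uv<0$ for $f=\tfrac12(u^2-v^2)$ --- no elliptic theory needed there. The elliptic work is done in Lemma~\ref{lem:bound} on $w=u_x/u$, which satisfies \eqref{eqn:w} with right-hand side $\gamma''v\le0$; this is precisely where $\gamma''\le0$ enters, making $w$ a supersolution. The Hopf lemma applied to the slope $s=v/u$ at the bed transfers a sign to $w$ there, and if $\inf w$ is attained at $z_0\in S^+$, differentiating the dynamic condition \emph{twice} along the surface (\eqref{eqn:pone}, \eqref{eqn:ptwo}) together with $w_y(z_0)<0$ yields the quadratic inequality $Aw^2+Bw+C<0$ with $A,C>0$ (using $\gamma,\gamma'\le0$), forcing $w(z_0)<0$ and the derivative-free inequality \eqref{lem:bound:bound}. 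The continuation then passes \eqref{lem:bound:bound} to a limit point, which Lemma~\ref{lem:bound}\ref{lem:bound:le} forces to be the crest or trough; the trough is excluded by \eqref{eqn:trough}, and the crest by the pressure inequality $\pa P/\pa n<0$ of Theorem~\ref{thm:pressure}\ref{thm:pressure:top}, which gives $\frac{d}{dx}u^2(x,\eta(x))>0$ along $S^+$ and hence $g-\gamma u\trough\le g-\gamma u\crest\le 0$, a contradiction. Your normal-momentum balance $\pa_\nu P=-\kappa^2\pa_s\theta-g\cos\theta$ is in the right spirit (it is essentially how the paper rules out overturning in Proposition~\ref{prop:sink}), but the missing second-order tangential differentiation and the quadratic-in-$w$ structure at the minimum point of $u_x/u$ are what actually close the argument, and they are absent from your proposal.
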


In fact, we will prove a bound that is strictly less than $45^\circ$,
although the bound depends on the wave; see Corollaries~\ref{cor:sigma} and
\ref{cor:sigma:sol}.

In \cite{amick:bounds}, Amick first shows that $\theta < \pi/2\beta
\approx 38.2^\circ$, where $\beta = (9+\sqrt{97})/8$, and then uses a
different argument to improve this bound to $31.15^\circ$. His first
step is somewhat related to our arguments for waves with vorticity,
and we outline it now. Working with the Nekrasov formulation, which is
valid only in the irrotational case, Amick essentially considers 
\begin{align*}
  f_\alpha = \Re[((c-U)+iV)^\alpha] 
  = [(U-c)^2+V^2]^{\alpha/2} \cos(\alpha\theta),
\end{align*}
which is the real part of an analytic function of the complex variable
$z=x+iy$, where $\alpha \ge 1$ is a parameter. The bound $\abs \theta
< \pi/2\alpha$ will follow if we can show that $f_\alpha > 0$. Since
$f_\alpha = (c-U)^\alpha > 0$ at crests and troughs, it is enough for
$f_\alpha$ to be decreasing from crest to trough along the free
surface, i.e.~for its tangential derivative
\begin{align*}
  W_\alpha = \frac{(U-c)\pa_x + V\pa_y}{(U-c)^2+V^2} f_\alpha,
\end{align*}
which is again the real part of an analytic function of $z$, to have
an appropriate sign on each half period. By combining maximum
principle arguments with a continuation in the parameter $\alpha$,
Amick deduces that $W_\alpha$ has the appropriate sign (and hence
$\abs\theta < \pi/2\alpha$) for $1 \le \alpha \le \beta$. Toland and
Plotnikov \cite{pt:fourier} subsequently gave a different proof, also
depending on the Nekrasov formulation and complex variable techniques,
that the angle is less than 45$^\circ$. 

There are considerable difficulties in extending Amick's arguments to
waves with vorticity. First, with a general vorticity, the water wave
problem cannot be reformulated as a Nekrasov-type integral equation on
the boundary. More significantly, complex function theory no longer
guarantees that the functions $f_\alpha$ and $W_\alpha$ are harmonic.
Indeed, for a general $\alpha$, formulas for their Laplacians contain
many terms of seemingly indeterminate sign. Thus Amick's application
of the maximum principle to $W_\alpha$ seems out of reach.
Nevertheless, we are able to distill some of Amick's ideas in the
special case $\alpha = 2$, where the formulas are simpler, and under
the additional assumption $\gamma,\gamma',\gamma'' \le 0$ on the
vorticity function. Instead of varying $\alpha$, we use a continuation
argument along a connected set $\cm$ of solutions, an option which
Amick also explores in \cite{amick:bounds}. 

Section \ref{sec:periodic} is devoted to the main theorem (Theorem
\ref{thm:main}), which is a more precise version of Theorem \ref{Theorem1}  
that specifies the notion of connectedness.  The key to the proof is
that $U_x$ has a strict sign between each crest and trough.  In order
to prove this fact, we look at the possibility that, for some wave
along $\cm$, $U_x$ might vanish somewhere away from the crest and
trough.  Then we make use of the Hopf maximum principle applied to
$U_x/(U-c)$ and the slope $V/(U-c)$ together with the boundary
conditions to prove certain inequalities.  The quantity
$g-(U-c)\gamma$ plays an important role. We note that the product
$(U-c)\gamma$ is the vertical component of what is sometimes called
the ``vortex force'' \cite{saffman:dynamics}. 

Construction of the connected bifurcation set goes back to
\cite{kn:existence} in the irrotational case and \cite{cs:exact} in
the case of general vorticity. In Section \ref{sec-existence} we
discuss the relationship between our basic result and the bifurcation
set of solutions that we know exist.  

In Section \ref{sec-solitary}, Theorem \ref{Theorem1} is proven for 
the case of solitary waves.  

Section \ref{sec-pressure} is devoted to a series of inequalities on
the pressure inside the fluid in the absence of stagnation points.
Some of them have been previously known and others are new facts but
in any case we prove them in enough generality to permit overturning
waves. All of them are based on the maximum principle applied to
various quantities.  One of them is used in the proof of the main
theorem.  Allowing for stagnation points inside the fluid but not on
the free surface, we also prove that every overturning wave must have
a pressure sink. 

We thank John Toland for informing us of the reference \cite{pt:fourier}.

\section{Bound on the slope} \label{sec:periodic}
{\it For the rest of this paper it is convenient to denote the
relative velocity by $u=U-c$ and $v=V$.} Except in
Sections~\ref{sec-solitary} and \ref{sec-pressure:solitary}, we
consider symmetric $2L$-periodic water waves with vorticity. In a
frame moving with the wave they are described as solutions of 
\begin{subequations} \label{eqn:ww}
  \begin{alignat}{2}
    \label{eqn:ww:u}
    uu_x + vu_y &= -P_x &\quad& \textup{ in } {-d < y < \eta(x)},\\
    \label{eqn:ww:v}
    uv_x + vv_y &= -P_y - g &\quad& \textup{ in } {-d < y < \eta(x)},\\
    \label{eqn:ww:div}
    u_x + v_y &= 0 &\quad& \textup{ in } {-d < y < \eta(x)},\\
    \label{eqn:ww:dynamic}
    P &= \patm  &\quad& \textup{ on } y = \eta(x),\\
    \label{eqn:ww:top}
    v &= \eta_x u  &\quad& \textup{ on } y = \eta(x),\\
    \label{eqn:ww:bot}
    v &= 0  &\quad& \textup{ on } y = -d,
  \end{alignat}
\end{subequations}
with $u,\eta$ even in $x$, $v$ odd in $x$, and where $u,v,\eta,P$ are
$2L$-periodic in the $x$ variable. We take $\eta(x)$ to have mean
value zero, so that $d$ is the average depth.  The ``atmospheric''
pressure $\patm$ and depth $d > 0$ are constants. As for regularity,
we will assume $\eta \in C^4_\per[-L,L]$ and $u,v,P \in
C^3_\per(\overline{\Omega})$, where
\begin{align*}
  \Omega = \{(x,y) \in \R^2 : -L < x < L,\ -d < y < \eta(x)\} 
\end{align*}
denotes a period the fluid domain and ``$\per$'' denotes
$2L$-periodicity in $x$. In addition, we will always assume that
\begin{align}
  \label{eqn:nostag}
  \sup_\Omega u < 0,
\end{align}
which in particular rules out stagnation points in the fluid where
$u=v=0$.

We say a wave is \emph{trivial} if $u,v,\eta,P$ depend only on the
vertical variable $y$. This forces $v \equiv 0$, $\eta \equiv 0$, and
$P = -gy$, but does not place any new restrictions on the horizontal
velocity $u =u_0(y)$, which at this point can be an arbitrary
(negative) function of $-d \le y \le 0$.

Let $\Omega^+$, $S^+$, and $B^+$ denote half-periods of
the fluid domain, free surface, and bed,
\begin{align*}
  \Omega^+ &= \{(x,y) \in \R^2 :  0 < x < L,\ -d < y < \eta(x)\},\\
  S^+ &= \{(x,\eta(x)) : 0 < x < L \},\\
  B^+ &= \{(x,-d) : 0 < x < L \}.
\end{align*}
See Figure~\ref{fig:omegaplus}a.  We assume that all nontrivial waves
are strictly monotone in that
\begin{align}
  \label{eqn:mono}
  v > 0 \textup{ in } \Omega^+ \cup S^+.
\end{align}
Using \eqref{eqn:nostag} and \eqref{eqn:mono} in \eqref{eqn:ww:top},
we see that the slope 
\begin{align*}
  \eta_x = \frac vu < 0 \quad \text{ on } S^+. 
\end{align*}
That is, the free surface $\eta$ is strictly decreasing between the
crest at $\crest$ and trough at $\trough$. All of the above
assumptions are satisfied by the waves constructed in \cite{cs:exact}.

\begin{figure}[t]    
  \centering   
  \includegraphics[scale=1.1]{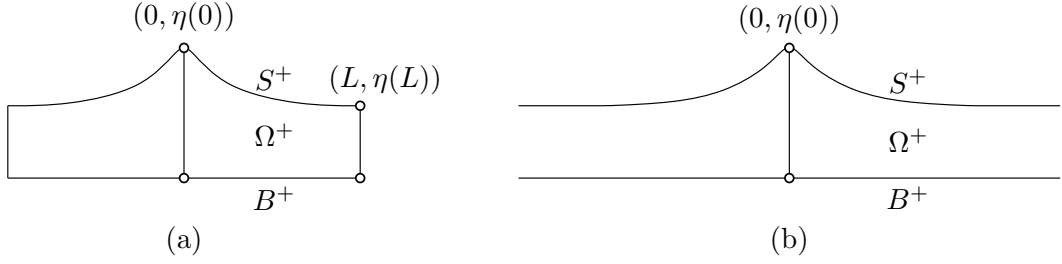}
  \caption{(a) The half-period $\Omega^+$ of the fluid domain, together
  with the portions $S^+$ and $B^+$ of its boundary on the free
  surface and bed. (b) Analogous definitions for a solitary wave.}  
  \label{fig:omegaplus}  
\end{figure}

Thanks to incompressibility \eqref{eqn:ww:div}, there exists a stream
function $\psi \in C^4_\per(\overline\Omega)$, unique up to an
additive constant, satisfying
\begin{align*}
  \psi_y = u,
  \qquad \psi_x = -v
\end{align*}
in the fluid domain $\Omega$. By the kinematic boundary conditions
\eqref{eqn:ww:top} and \eqref{eqn:ww:bot}, $\psi$ is constant on both
the free surface $y=\eta(x)$ and the bed $y=-d$. We normalize $\psi$
so that it vanishes on the free surface, and let $m$ denote its value
on $y=-d$, which is positive since by \eqref{eqn:nostag} we have
$\psi_y = u < 0$. The assumption \eqref{eqn:nostag} also guarantees
that the vorticity $\omega = v_x - u_y$ can be expressed as 
\begin{align*}
  \omega = -\Delta \psi = \gamma(\psi) 
\end{align*}
for some function $\gamma \in C^2[0,m]$ called the \emph{vorticity
function}; see for instance \cite{cs:exact}. 

The main additional assumption that we will make throughout
Section~\ref{sec:periodic} is that the vorticity function $\gamma$
satisfies the sign conditions
\begin{align}
  \label{eqn:mastergam}
  \gamma(\psi) \le 0
  ,\ 
  \gamma'(\psi) \le 0
  ,\ 
  \gamma''(\psi) \le 0
  \quad \textup{ for } 0 < \psi < m.
\end{align}
Note that \eqref{eqn:mastergam} is satisfied whenever $\gamma \le 0$
is constant. For the case of a trivial wave with $u=u_0(y) < 0$, 
a short calculation shows that 
\eqref{eqn:mastergam} is equivalent to 
\begin{align}
  \label{eqn:masteru0}
  u_{0y} \ge 0,
  \qquad 
  u_{0yy} \le 0,
  \qquad 
  u_0 u_{0yyy} - u_{0y} u_{0yy} \le 0.
\end{align}

\begin{definition}[The water wave set $\w$] 
  Fixing the half-period $L$, we denote by $\w$ the set of waves
  satisfying the above assumptions, or more precisely the set of
  5-tuples $(u,v,P,\eta,d)$ with $d > 0$, $\eta \in C^4_\per[-L,L]$
  and $u,v,P \in C^3_\per(\overline\Omega)$ for which $u,P,\eta$ are
  even in $x$ and $v$ is odd in $x$, which satisfy \eqref{eqn:ww},
  \eqref{eqn:nostag}, and \eqref{eqn:mastergam}, and which either
  satisfy the monotonicity condition \eqref{eqn:mono} or are trivial.
\end{definition}

We are interested in subsets $\cm \subset \w$ which are {\it
connected} in some sense. In \cite{cs:exact}, this connectedness is
expressed by first making a change of variables which transforms the
problem into a periodic strip. For our purposes, a weaker notion of
connectedness merely involving traces on the free surface will
suffice. Consider the mapping
\begin{align}
  \label{eqn:tau}
  \tau \colon \w \to (C^0[0,L])^7,
  \qquad 
  \tau(u,v,P,\eta,d)(x) = (u,v,u_x,v_x,u_y,u_{xx},u_{xy})(x,\eta(x)).
\end{align}
Our main result below is a more precise version of Theorem 1. It
concerns subsets $\cm$ of $\w$ which contain a trivial wave and for
which  $\tau(\cm)$ a connected subset of $(C^0[0,L])^7$. One can check
that the sets $\cm$ constructed in \cite{cs:exact} satisfy this
connectedness property. 

\begin{theorem}\label{thm:main} 
  Let $\cm \subset \w$ be a set of periodic water waves such that
  $\tau(\cm)$ is a connected subset of $(C^0[0,L])^7$. If $\cm$
  contains at least one trivial wave, and if all the waves in $\cm$
  satisfy
  \begin{align}
    \label{eqn:trough}
    g-\gamma u > 0 \quad \textup{at the trough $\trough$},
  \end{align}
  then the slope $\abs{v/u}$ of any streamline of any wave in $\cm$ is
  $<1$.
\end{theorem}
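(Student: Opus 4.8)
The plan is to prove that $f := u^2 - v^2$ is strictly positive throughout $\overline\Omega$, which (since $u<0$) is equivalent to the desired slope bound $\abs{v/u} < 1$. I will first reduce the interior estimate to an estimate on the free surface. Differentiating $\Delta\psi = -\gamma(\psi)$ gives $\Delta u = -\gamma' u$ and $\Delta v = -\gamma' v$, and a direct computation then shows that the slope $s := v/u$ satisfies a homogeneous elliptic equation with no zeroth-order term,
\[
  \Delta s + \frac{2}{u}\,\nabla u\cdot\nabla s = 0 \qquad \text{in } \Omega^+.
\]
By the oddness of $v$ and the bed condition \eqref{eqn:ww:bot}, $s$ vanishes on the two vertical symmetry lines $\{x=0\}$, $\{x=L\}$ and on $B^+$, while $s = \eta_x < 0$ on $S^+$ by \eqref{eqn:mono} and \eqref{eqn:nostag}. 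The maximum principle then forces $\sup_{\overline{\Omega^+}}\abs{s} = \sup_{S^+}\abs{s}$, so it suffices to prove $\abs{v/u} < 1$ on $S^+$.

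Since $S^+$ is itself a streamline, the tangential derivative of $f$ along it is a positive multiple of Amick's quantity with $\alpha=2$, namely
\[
  W_2 = \frac{(u\pa_x + v\pa_y)f}{u^2+v^2} = 2u_x - \frac{2uv\gamma}{u^2+v^2},
\]
where I used incompressibility \eqref{eqn:ww:div} and $v_x - u_y = \gamma$. On $S^+$ the sign conditions give $uv < 0$ and $\gamma \le 0$, so the second term is $\le 0$; hence if $u_x < 0$ on $S^+$, then $W_2 < 0$ there. This says $f$ is increasing in $x$ along $S^+$, so its minimum is attained at the crest $\crest$, where $v=0$ and $f = u^2 > 0$. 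Thus $f>0$ on $S^+$, and everything hinges on the strict sign of $u_x$ between crest and trough.

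To control $u_x$, set $r := u_x/u$. From $\Delta u_x = \gamma'' uv - \gamma' u_x$ one finds that $r$ again solves an elliptic equation with no zeroth-order term,
\[
  \Delta r + \frac{2}{u}\,\nabla u\cdot\nabla r = \gamma'' v \le 0 \qquad \text{in } \Omega^+,
\]
so $r$ is a supersolution. Its boundary data are $r = 0$ on the symmetry lines ($u_x$ is odd in $x$), and $r \ge 0$ on $B^+$, because $v\ge 0$ with $v=0$ on the bed forces $v_y \ge 0$ there, whence $u_x = -v_y \le 0$ by \eqref{eqn:ww:div}. On $S^+$ I will derive, from the dynamic condition \eqref{eqn:ww:dynamic} together with the momentum equations, an oblique boundary relation for $r$ whose decisive coefficient is $g - \gamma u$; the hypothesis \eqref{eqn:trough} makes this relation compatible with $r\ge 0$ and, via the Hopf lemma, prevents $r$ from attaining the value $0$ on $S^+$. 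I then run a continuation argument along $\cm$: at the trivial wave $r\equiv 0$, and I claim the property ``$u_x < 0$ in $\Omega^+\cup S^+$'' is open and closed in the connected set $\tau(\cm)$. Closedness is the maximum-principle step: if along a sequence $u_x<0$ but in the limit $r$ touches $0$ at a point of $\Omega^+\cup S^+$, then the strong maximum principle (at an interior point) or the Hopf lemma with the surface relation (at a point of $S^+$) forces the limiting wave to be trivial, a contradiction. Hence $u_x < 0$ on $S^+$ for every nontrivial wave in $\cm$.

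The main obstacle is this last step, and within it the free-surface analysis of $r$. Extracting the correct oblique boundary relation on $S^+$ from the differentiated Bernoulli law and isolating $g-\gamma u$ as the governing coefficient is delicate, and it is exactly here that \eqref{eqn:trough} is used: near the trough $\trough$ one has $v\to 0$, so the interior sign $\gamma'' v \le 0$ gives no information and the argument must be carried entirely by the boundary. Were $g-\gamma u$ permitted to vanish at the trough, the Hopf contradiction would break down, which is why the conclusion can only be asserted until $(U-c)\gamma = g$ there. A secondary difficulty is the topological bookkeeping of the continuation: since the trace map \eqref{eqn:tau} records only surface data, the openness and closedness of the set where $u_x<0$ must be recovered from boundary information through the maximum principle, which is precisely why $\tau$ retains the second-order jet $u_{xx}, u_{xy}$ on the surface.
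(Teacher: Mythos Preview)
Your overall structure—reduce to $u_x<0$ via the elliptic equations for $s=v/u$ and $r=u_x/u$, then continue along $\tau(\cm)$—matches the paper. But the heart of the argument, the free-surface analysis of $r$, is left as a promise, and what you promise is not quite what is true. There is no clean ``oblique boundary relation'' for $r$ on $S^+$ whose coefficient is $g-\gamma u$. What the paper actually does (Lemma~\ref{lem:bound}) is differentiate the dynamic condition \emph{twice} along $S$ (obtaining \eqref{eqn:pone} and \eqref{eqn:ptwo}), evaluate at a point $z_0\in S^+$ where $r$ attains its infimum, use the tangential-derivative condition and the Hopf inequality $r_y(z_0)<0$, and arrive at a quadratic inequality $Ar^2+Br+C<0$ with $A,C>0$ and
\[
  B=\frac{g(u^4-4u^2v^2-v^4)-\gamma u^3(u^2+5v^2)}{4u^2v^2(u^2+v^2)}.
\]
This yields two things: $r(z_0)<0$ (so $u_x<0$ throughout if $u_x\le 0$ on $S^+$), and, for waves with $u_x\not\le 0$ on $S^+$, the strict sign $B>0$ at $z_0$. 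The quantity $g-\gamma u$ only appears after passing to a limit where $v(z_0)\to 0$, i.e.\ when $z_0$ collapses to the crest or the trough.

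Your continuation is also mis-specified. The property ``$u_x<0$ in $\Omega^+\cup S^+$'' fails at the trivial wave, so it cannot serve as the invariant. The paper instead carries the \emph{non-strict} surface condition $u_x\le 0$ on $S^+$ (the set $\w_0$), for which closedness is trivial and the delicate step is \emph{openness}: one must rule out a sequence in $\cm\setminus\w_0$ converging to $\cm\cap\w_0$. That is where Lemma~\ref{lem:bound}\ref{lem:bound:gtr} is applied to the approximating waves, and where, after taking limits, the bad point is forced to be the crest or the trough. You omit the crest case entirely; the paper disposes of it using Theorem~\ref{thm:pressure}\ref{thm:pressure:top} ($\partial P/\partial n<0$ on $S$), which gives $\tfrac{d}{dx}u^2(x,\eta(x))>0$ and hence $|u|$ smaller at the crest than at the trough, transferring the inequality $g-\gamma u\le 0$ back to the trough and contradicting \eqref{eqn:trough}. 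Finally, your explanation of why the hypothesis is needed only at the trough (``near the trough $\gamma''v\le 0$ gives no information'') is not the mechanism: the hypothesis enters solely through the limit of the inequality $B>0$ as $v\to 0$, not through any degeneration of the interior equation.
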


The proof of Theorem \ref{thm:main} depends on a couple of lemmas.
The first one provides a sufficient condition for the upper bound on
the slope.  

\begin{lemma}\label{lem:easy}
  If a wave in $\w$  satisfies $u_x < 0$ in $\Omega^+ \cup S^+$, 
  then $\max_{\overline\Omega} \abs{v/u} < 1$.
  \begin{proof}
    Letting $f = \frac 12(u^2 - v^2)$, we first note that the desired
    inequality is equivalent to $f > 0$ on $\overline{\Omega^+}$. By
    symmetry and periodicity, $v=0$ on the vertical lines $x=0$ and
    $x=L$ as well as the bed $B^+$. Since $\max_{\overline\Omega} u <
    0$ by \eqref{eqn:nostag}, we therefore have $f=u^2/2 > 0$ on
    $\pa\Omega^+\setminus S^+$.   
    Differentiating, we find 
    \begin{align}
      \label{eqn:hasasign}
      uf_x + vf_y = (u^2+v^2)u_x - \gamma uv < 0
      \quad \text{ on } \Omega^+ \cup S^+,
    \end{align}
    where we have used the identities $u_x+v_y=0$ and $v_x-u_y=\gamma$
    to eliminate derivatives of $v$ in favor of derivatives of $u$.
    The expression in \eqref{eqn:hasasign} is strictly negative in
    $\Omega^+ \cup S^+$, since $u_x < 0$ by assumption, and $u < 0$,
    $v\ge 0$, and $\gamma \le 0$ by \eqref{eqn:nostag},
    \eqref{eqn:mono}, and \eqref{eqn:mastergam}.
    
    So suppose for the sake of contradiction that
    $\min_{\overline{\Omega^+}} f \le 0$.  Then by the previous
    comments the minimum could not be achieved on
    $\pa\Omega^+\setminus S^+$ but only at some point $(x^*,y^*)$ in
    $\Omega^+ \cup S^+$.   Now if $(x^*,y^*)$ lies in $\Omega^+$, then
    it would be a critical point of $f$; that is, $f_x = f_y = 0$ at
    $(x^*,y^*)$. But then the left hand side of \eqref{eqn:hasasign}
    would vanish, which is a contradiction.   Finally if $(x^*,y^*)$
    lies on the surface $S^+$, then we would have 
    \begin{align*}
      0 > (u^2+v^2)u_x - \gamma uv
      = uf_x + vf_y = u \frac d{dx} f(x,\eta(x)) = 0
      \qquad \textup{ at }(x^*,y^*),
    \end{align*}
    which is again a contradiction.  Thus the minimum of $f$ is positive.  
  \end{proof}
\end{lemma}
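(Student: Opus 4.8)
The plan is to recast the slope bound as a positivity statement and then run a first-order (transport) version of the maximum principle along the velocity field. Since $u<0$ throughout $\overline\Omega$ by \eqref{eqn:nostag}, the inequality $\abs{v/u}<1$ is equivalent to $u^2-v^2>0$, so I set $f=\frac12(u^2-v^2)$ and aim to show $f>0$ on $\overline\Omega$. By evenness of $u$ and oddness of $v$ it is enough to prove this on the half-period $\overline{\Omega^+}$. On the part of $\pa\Omega^+$ away from the free surface---the symmetry lines $x=0$ and $x=L$ together with the bed $B^+$---oddness of $v$ and the bottom condition \eqref{eqn:ww:bot} force $v=0$, so there $f=u^2/2>0$ automatically.

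The crux is a sign computation for the derivative of $f$ along the flow. Differentiating $f$ and using incompressibility $v_y=-u_x$ from \eqref{eqn:ww:div} together with the vorticity relation $u_y-v_x=-\gamma$ to eliminate every derivative of $v$, I expect the transport derivative to collapse to
\[
  uf_x+vf_y=(u^2+v^2)u_x-\gamma uv.
\]
In $\Omega^+\cup S^+$ each factor then has a definite sign: $u_x<0$ by hypothesis, $u<0$ by \eqref{eqn:nostag}, $v\ge0$ by \eqref{eqn:mono} (the hypothesis $u_x<0$ rules out a trivial wave, so \eqref{eqn:mono} applies), and $\gamma\le0$ by \eqref{eqn:mastergam}. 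Hence the first term is strictly negative and the second is nonpositive, since $-\gamma\ge0$ while $uv\le0$; so $uf_x+vf_y<0$ strictly on $\Omega^+\cup S^+$.

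I then argue by contradiction: suppose $\min_{\overline{\Omega^+}}f\le0$. By the boundary computation the minimum cannot lie on $\pa\Omega^+\setminus S^+$, so it is attained at a point either in the interior $\Omega^+$ or on $S^+$. If interior, then $f_x=f_y=0$ there, forcing $uf_x+vf_y=0$ and contradicting strict negativity. If on $S^+$, I invoke the kinematic condition \eqref{eqn:ww:top}, which says that $(u,v)$ is tangent to the free surface; consequently $uf_x+vf_y=u\,\frac{d}{dx}f(x,\eta(x))$, and the tangential derivative of $x\mapsto f(x,\eta(x))$ vanishes at its interior minimum, again giving $uf_x+vf_y=0$, a contradiction. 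Hence $f>0$ on $\overline{\Omega^+}$. The genuinely delicate point is the sign identity for $uf_x+vf_y$: everything depends on the cancellation that reduces the transport derivative to an expression in $u_x,u,v,\gamma$ alone, each of known sign. The surface case is the other place needing care, but it becomes routine once one observes that non-stagnation keeps $(u,v)\ne0$ and that the flow is tangent to $S^+$, so the first-order operator reduces there to a tangential derivative.
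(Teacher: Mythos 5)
Your proposal is correct and follows essentially the same argument as the paper: the same function $f=\tfrac12(u^2-v^2)$, the same transport identity $uf_x+vf_y=(u^2+v^2)u_x-\gamma uv$ with the same sign analysis, and the same contradiction at an interior versus surface minimum using tangency of $(u,v)$ to $S^+$. Your parenthetical remark that $u_x<0$ rules out trivial waves (so that \eqref{eqn:mono} applies) is a small point the paper leaves implicit, but otherwise the two proofs coincide.
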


Now let $\w_0$ denote the set of waves in $\w$ for which $u_x \le 0$
on $S^+$. The set $\w_0$ certainly contains all the trivial waves in
$\w$ because they satisfy $u_x \equiv 0$.   Looking back at the
definition \eqref{eqn:tau} of the mapping $\tau$, we also see that a
wave $(u,v,P,\eta,d)$ in $\w$ lies in $\w_0$ if and only if
$\tau(u,v,P,\eta,d)$ lies in $\tau(\w_0)$. Our main tool to prove
Theorem~\ref{thm:main} is the following lemma. 

\begin{lemma}[Bounds along $S^+$]\label{lem:bound}
  \hfill
  \begin{enumerate}[label=\textup{(\alph*)}]
  \item\label{lem:bound:le} For every nontrivial wave in $\w_0$, $u_x
    < 0$ on $\Omega^+ \cup S^+$.
  \item\label{lem:bound:gtr} For every nontrivial wave in $\w
    \setminus \w_0$, $\inf_{\Omega^+} u_x/u < 0$ is achieved at a
    point $z_0 \in S^+$ where
    \begin{align}
      \label{lem:bound:bound}
      g(u^4-4u^2v^2 -v^4) - \gamma u^3(u^2 + 5v^2) < 0.
    \end{align}
  \end{enumerate}
\end{lemma}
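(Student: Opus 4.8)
The plan is to study the ratio $w = u_x/u$, which is well defined because $\sup_\Omega u < 0$. Differentiating the identity $\Delta\psi = -\gamma(\psi)$ and using $\psi_y = u$, $\psi_x = -v$ gives $\Delta u = -\gamma'(\psi)u$ and then $\Delta u_x = \gamma''(\psi)uv - \gamma'(\psi)u_x$. Substituting these into the quotient formula for $\Delta(u_x/u)$, the terms linear in $\gamma'$ cancel and one is left with
\begin{align*}
  \Delta w + \frac 2u \nabla u \cdot \nabla w = \gamma''(\psi)\, v
  \qquad \textup{in } \Omega^+ .
\end{align*}
By \eqref{eqn:mono} and \eqref{eqn:mastergam} the right-hand side is $\le 0$, so $w$ is a supersolution of a uniformly elliptic operator (its coefficients are bounded thanks to \eqref{eqn:nostag}) having no zeroth-order term. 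Hence both the strong minimum principle and the Hopf boundary-point lemma apply to $w$ on $\Omega^+$.

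Next I would determine the sign of $w$ on $\pa\Omega^+ \setminus S^+$. On the vertical sides $x=0$ and $x=L$, oddness of $u_x$ in $x$ forces $u_x = 0$, so $w = 0$. On the bed $B^+$ we have $v \equiv 0$ while $v > 0$ just above by \eqref{eqn:mono}, hence $v_y \ge 0$ there and, by incompressibility, $u_x = -v_y \le 0$, giving $w \ge 0$. Thus $w \ge 0$ on $\pa\Omega^+ \setminus S^+$. For a wave in $\w_0$ we also have $w \ge 0$ on $S^+$, so $w \ge 0$ on all of $\pa\Omega^+$; the minimum principle yields $w \ge 0$ in $\Omega^+$, and since a nontrivial wave has $w \not\equiv 0$ (otherwise $u_x \equiv 0$, forcing a trivial wave), the strong minimum principle gives $w > 0$, i.e. $u_x < 0$, in $\Omega^+$. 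For a wave in $\w\setminus\w_0$ there is a point of $S^+$ where $u_x > 0$, i.e. $w < 0$, so $\min_{\overline{\Omega^+}} w < 0$; as $w \ge 0$ off $S^+$ and the minimum cannot be interior, it is attained at an interior point $z_0$ of the arc $S^+$.

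In either case the remaining work concentrates at a point $z_0 \in S^+$ that is a boundary minimum of $w$: there the tangential derivative of $w$ vanishes and Hopf's lemma gives $\pa_n w(z_0) < 0$, which (since the outward normal is proportional to $(-\eta_x,1)$ and $\eta_x = v/u$) is equivalent to $w_y(z_0) < 0$. The crux, and the step I expect to be the main obstacle, is converting this into \eqref{lem:bound:bound}. I would start from the surface Bernoulli law $\tfrac12(u^2+v^2) + g\eta = \textup{const}$ on $S^+$; differentiating it along $S^+$ and using $v = \eta_x u$, $v_x = u_y + \gamma$, $v_y = -u_x$ yields the boundary identity
\begin{align*}
  (u^2-v^2)u_x + 2uv\,u_y + v(u\gamma + g) = 0 \qquad \textup{on } S^+,
\end{align*}
which lets me express $u_y$ (and hence $u_x = wu$) on $S^+$ in terms of $w$ and the algebraic data. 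Differentiating this identity once more tangentially (noting that $\psi \equiv 0$ on $S^+$, so the tangential derivative of $\gamma(\psi)$ drops), eliminating $u_{yy}$ via $u_{xx}+u_{yy} = -\gamma' u$, and imposing that the tangential derivative of $w$ vanishes at $z_0$ lets me solve for the second derivatives and compute $w_y(z_0)$ explicitly.

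After these eliminations the inequality $w_y(z_0) < 0$ becomes a quadratic inequality in the single number $w = w(z_0)$. Writing $\mathcal I := \gamma u^3(u^2+5v^2) - g(u^4 - 4u^2v^2 - v^4)$, which is precisely the negative of the left-hand side of \eqref{lem:bound:bound}, it reads
\begin{align*}
  -\frac{(u^2+v^2)^2}{4v^3}\,w^2 + \frac{\mathcal I}{4u^3 v^2}\,w + v\Big(\gamma' - \frac{g\gamma}{4u^3} - \frac{g^2}{4u^4}\Big) > 0 .
\end{align*}
The $w^2$-coefficient is negative, and by $\gamma' \le 0$, $\gamma \le 0$, $u < 0$, $g > 0$ the constant term is $\le 0$; hence the middle term must be positive, i.e. $\mathcal I\, w/(u^3 v^2) > 0$. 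For part \ref{lem:bound:gtr} we have $w < 0$ and $u^3 < 0$, which forces $\mathcal I > 0$, that is \eqref{lem:bound:bound}. For part \ref{lem:bound:le}, where we supposed $w(z_0) = 0$, the inequality collapses to $v(\gamma' - g\gamma/(4u^3) - g^2/(4u^4)) > 0$, contradicting that this term is strictly negative; this rules out any zero of $w$ on $S^+$ and yields $u_x < 0$ on $\Omega^+ \cup S^+$. The heart of the argument is thus the boundary computation producing the displayed quadratic, where the sign conditions \eqref{eqn:mastergam} on $\gamma, \gamma', \gamma''$ are exactly what make every term cooperate.
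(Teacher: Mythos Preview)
Your proposal is correct and follows essentially the same approach as the paper: study $w=u_x/u$, derive the elliptic inequality $\Delta w + \tfrac 2u\nabla u\cdot\nabla w = \gamma'' v \le 0$, locate the infimum of $w$ on $S^+$, and at that boundary minimum combine the vanishing tangential derivative, the Hopf inequality $w_y<0$, and the differentiated Bernoulli identity to obtain a quadratic in $w(z_0)$ whose sign structure forces \eqref{lem:bound:bound} (or a contradiction when $w(z_0)=0$).

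The one noteworthy difference is on the bed. You argue directly that $v=0$ on $B^+$ with $v>0$ just above gives $v_y\ge 0$, hence $u_x=-v_y\le 0$ and $w\ge 0$ there. The paper instead introduces the slope $s=v/u$, shows it satisfies its own elliptic equation, and applies the Hopf lemma to $s$ on $B^+$ to obtain the \emph{strict} inequality $w>0$ on $B^+$. Your weaker non-strict bound suffices for both parts (for \ref{lem:bound:gtr} because $I<0$ cannot be attained where $w\ge 0$; for \ref{lem:bound:le} because the claim concerns only $\Omega^+\cup S^+$), so your route is a small simplification. Your displayed quadratic differs from the paper's by harmless positive factors (the paper carries an extra $(u^2+v^2)$ in some denominators), but the sign analysis of the three coefficients is identical and the conclusion is the same.
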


\begin{proof}  
  Note that there are no derivatives in the expression
  \eqref{lem:bound:bound}. We begin by writing the identities that
  are obtained by differentiating the dynamic boundary condition
  \eqref{eqn:ww:dynamic} once and then twice along the free surface
  $S$. Since the pressure $P$ is constant along the free surface $S$,
  we have 
  \begin{align*}
    0 &= u \frac d{dx} P(x,\eta(x))
    = uP_x + vP_y
    = -u(uu_x+vu_y)+v(uv_x+vv_y +g) 
    \qquad \textup{on $S$},
  \end{align*}
  where have used the kinematic boundary condition \eqref{eqn:ww:top}
  to eliminate $\eta_x$ in favor of $v$ and then the Euler equations
  \eqref{eqn:ww:u}--\eqref{eqn:ww:v} to eliminate $P_x$ and $P_y$.
  Using incompressibility $u_x+v_y=0$ to eliminate $v_y$ and the
  definition $v_x-u_y=\gamma$ of the vorticity function $\gamma$ to
  eliminate $v_x$, we are left with
  \begin{align}
    \label{eqn:pone}
    (v^2-u^2)u_x - 2uv u_y - gv - \gamma uv = 0
    \qquad \textup{on $S$}.
  \end{align}
  Taking another derivative along the free surface, we find
  \begin{align}
    \label{eqn:ptwo}
    \left.
    \begin{aligned}
      0 &= (u\pa_x + v\pa_y)\left[(v^2-u^2)u_x - 2uv u_y - gv - \gamma uv
      \right]\\
      &= -2(u^2+v^2)(u_x^2+u_y^2) + gvu_x - guu_y + (3uv^2 - u^3) u_{xx} +
      (v^3-3u^2 v)u_{xy}
      \\
      &\qquad -\gamma((3u^2+v^2) u_y - 2uv u_x + gu)
      +2 \gamma' u^2 v^2 - \gamma^2 u^2  
    \end{aligned}
    \right\}
    \textup{ on $S$}, 
  \end{align}
  where now we have also differentiated the identities $u_x+v_y=0$ and
  $v_x - u_y = \gamma$ to eliminate the second partials of $v$.
  
  Consider any nontrivial wave in $\w$.  We will apply maximum principle
  arguments to the two functions
  \begin{align*}
    w:= \frac{u_x}u,
    \qquad s := \frac vu.
  \end{align*}
  The second one is the slope of the streamlines.  Now two tedious but
  elementary computations show that both of these functions satisfy
  elliptic equations, namely, 
  \begin{align}
    \label{eqn:w}
    &\Delta w
    + 2\frac{u_x}u w_x
    + 2\frac{u_y}u w_y
    = \gamma'' v \le 0,\\
    \label{eqn:s}
    &\Delta s 
    + 2v\frac{\gamma -u s_x}{u^2+v^2} s_x
    + 2u\frac{\gamma -v s_y}{u^2+v^2} s_y
    =0.
  \end{align}
  Therefore the maximum principle implies that 
  \begin{align*}
    I := \inf_{\Omega^+} w = \inf_{\pa\Omega^+} w,
    \qquad 
    \sup_{\Omega^+} s = \sup_{\pa\Omega^+} s  
  \end{align*}
  are not attained in $\Omega^+$.   
  Now $v > 0$ and hence $s < 0$ on $\Omega^+$ by \eqref{eqn:mono}. 
  By symmetry, $s=v=0$ on $B^+$.   
  Thus the Hopf lemma (strong maximum principle) yields the inequality 
  \begin{align*}
    0 > s_y = - \frac{u_x}u = -w
    \qquad \text{ on } B^+,
  \end{align*}
  so that  $w > 0$ on the bottom $B^+$.   
  On the lateral boundaries $x=0$ and $x=L$, we have $w=0$ by symmetry.  
  Thus $I\le 0$ and  $w \ge 0$ on all of $\pa \Omega^+ \setminus S^+$.  

  Suppose now that the minimum $I$ is achieved on the surface $S^+$,
  say at some point $z_0 := (x_0,\eta(x_0)) \in S^+$. Then the
  tangential derivative $\pa_x (w(x,\eta(x)))$  must vanish at
  $x=x_0$.  Thus we have 
  \begin{align}
    \label{eqn:tang}
    0 =  uw_x + vw_y = 
    - uw^2 - \frac vu u_yw + \frac vu u_{xy} + u_{xx}
    \qquad \textup{ at } z_0.
  \end{align}
  By the Hopf lemma we also know that 
  \begin{align}
    \label{eqn:hopf}
    w_y(z_0) < 0
    \qquad \textup{ at } z_0.
  \end{align}
  Solving \eqref{eqn:tang} for $u_{xx}$ and \eqref{eqn:pone} for $u_y$
  and plugging these values into \eqref{eqn:ptwo} at the point $z_0$,
  we obtain the equation 
  \begin{align}
    \label{eqn:conc}
    \begin{aligned}
      w_y &= \frac{u_{xy}}u - \frac{u_y}u w\\
      &= -\frac{u(u^2+v^2)^2}{4v^3} w^2
      + \frac{g(u^4-4u^2v^2 -v^4) - \gamma u^3(u^2 + 5v^2)}
      {4u^2 v^2(u^2+v^2)} w
      - v \frac{g^2+ g\gamma u - 4\gamma' u^4}{4u^3(u^2+v^2)}
      \\
      &:= Aw^2 + Bw + C 
    \end{aligned}
  \end{align}
  at $z_0$, where the coefficients $A,B,C$ are functions of $u,v$ and
  $\gamma$ evaluated at $z_0$. Note that  $v > 0$ at $z_0 \in S^+$ thanks
  to our monotonicity assumption \eqref{eqn:mono}. Combining
  \eqref{eqn:conc} and \eqref{eqn:hopf}, we have
  \begin{align}
    \label{eqn:useme}
    Aw(z_0)^2 + Bw(z_0) + C < 0.
  \end{align}
  Since $\gamma(0), \gamma'(0) \le 0, u < 0$ and $v>0$, 
  both of the coefficients $A$ and $C$ are strictly positive. 
  In particular, $w(z_0)\ne0$ and hence $w(z_0) < 0$.

  Consider a wave in $\w_0$.   Then $u_x\le0$ and hence $w\ge0$ and
  $I=0$. However, we have just shown that $I=0$ cannot be attained on $S^+$,
  since at such a point $z_0 \in S^+$ we would have to have $w(z_0) <
  0$. This completes the proof of \ref{lem:bound:le}. 

  On the other hand for a wave in $\w\setminus \w_0$, we have shown
  that $I<0$ is attained at some point $z_0\in S^+$ where $w(z_0) < 0$
  and where \eqref{eqn:useme} holds. The first and last terms in
  \eqref{eqn:useme} being strictly positive, the middle term $Bw(z_0)$
  must be strictly negative.  Therefore $B > 0$.  This is exactly the
  same as the inequality \eqref{lem:bound:bound} in
  \ref{lem:bound:gtr}.
\end{proof}

\begin{proof}[Proof of Theorem~\ref{thm:main}]
  Thanks to Lemma~\ref{lem:easy} and
  Lemma~\ref{lem:bound}\ref{lem:bound:le}, the desired bound
  $\abs{v/u} < 1$ holds for any wave in $\w_0$. Thus to prove the
  theorem it suffices to show that $\cm \subset \w_0$, or equivalently
  $\tau(\cm) \subset \tau(\w_0)$. Now $\tau(\cm)$ is connected, so to
  prove $\tau(\cm) \subset \tau(\w_0)$ it is enough to show that
  $\tau(\cm) \cap \tau(\w_0)$ is nonempty, relatively open, and
  relatively closed in $\tau(\cm)$. It is easy to see from the
  nonstrict inequality in the definition of $\w_0$ that $\tau(\cm)
  \cap \tau(\w_0)$ is relatively closed, and it is nonempty since the
  trivial wave in $\cm$ lies in $\w_0$. Thus it remains to show that
  $\tau(\cm) \cap \tau(\w_0)$ is relatively open in $(C^0[0,L])^7$.

  Assume the contrary.  That is, there exists a sequence
  $\tau(u_n,v_n,P_n,\eta_n,d_n)$ in $\tau(\cm) \setminus \tau(\w_0)$
  for which 
  \begin{align}
    \label{eqn:conv}
    \tau(u_n,v_n,P_n,\eta_n,d_n) \longrightarrow \tau(u,v,P,\eta,d) 
    \quad 
    \text{ in $(C^0[0,L])^7$ }
  \end{align}
  for some $\tau(u,v,P,\eta,d)$ in $\tau(\cm) \cap \tau(\w_0)$. This
  means that $u_n$, $v_n$, and their first and second partials all
  converge uniformly as functions of $x$ along the free surface. Since
  the definitions of $\tau$ and $\w_0$ imply $\tau(\cm) \setminus
  \tau(\w_0) = \tau(\cm\setminus \w_0)$ and $\tau(\cm) \cap \tau(\w_0)
  = \tau(\cm \cap \w_0)$, we have $(u_n,v_n,P_n,\eta_n,d_n) \in \cm
  \setminus \w_0$ and $(u,v,P,\eta,d) \in \cm \cap \w_0$. Thus
  $(u_n)_x \not\le 0$ on $S^+_n$ but $u_x\le0$ on $S^+$.  
  
  Applying Lemma~\ref{lem:bound}\ref{lem:bound:gtr} to
  $(u_n,v_n,P_n,\eta_n,d_n)$ for any fixed $n$, we know that the function
  $u_{nx}/u_n$ achieves a negative minimum over
  $\overline{\Omega^+_n}$ at some point $z_n = (x_n,\eta_n(x_n))\in
  S^+_n$, with $0 < x_n < L$, at which  we have 
  \begin{align}
    \label{eqn:takeme}
    \Big[g(u^4_n-4u^2_nv^2_n -v^4_n) 
    - \gamma_n u^3_n(u^2_n + 5v^2_n)\Big](z_n) < 0.
  \end{align}
  By compactness we can assume that $z_x$ converges to some $x_0$ with
  $0 \le x_0 \le L$. Set $z_0 = (x_0,\eta(x_0))$. The uniform
  convergence \eqref{eqn:conv} implies that 
  \begin{align}
    \label{eqn:cleanconv}
    (u_n,v_n,u_{nx},v_{nx},u_{ny},u_{nxx},u_{nxy})(z_n)
    \longrightarrow (u,v,u_x,v_x,u_y,u_{xx},u_{xy})(z_0)
  \end{align}
  as $n\to \infty$.   
  Letting $\gamma_n$ be the vorticity function of
  $(u_n,v_n,P_n,\eta_n,d_n)$, we also have 
  \begin{align*}
    \gamma_n(0)
    = (v_{nx}-u_{ny})(z_n) 
    \to (v_x-u_y)(z_0) 
    = \gamma(0).
  \end{align*}

  In case $(u,v,P,\eta,d)$ is a trivial wave, with $\eta \equiv 0$, $v
  \equiv 0$, $u(x,y) = u_0(y)$, then \eqref{eqn:cleanconv} would imply
  $u_n(z_n) \to u_0(0) < 0$ and $v_n(z_n) \to 0$ as $n \to \infty$.
  Taking $n \to \infty$ in \eqref{eqn:takeme} therefore yields
  $gu_0^4(0)-\gamma u^5_0(0) \le 0$. Factoring and recalling that
  $u_0(0) < 0$, we deduce that $g-\gamma u_0(0) \le 0$. Since $\gamma$
  and $u=u_0$ are independent of $x$, this in particular implies
  $g-\gamma u \le 0$ at the trough $\trough$, which contradicts
  \eqref{eqn:trough}.
 
  So $(u,v,P,\eta,d)$ must be nontrivial. Because 
  \begin{align*}
    0 < u_{nx}(z_n) \to u_x(z_0) \le 0,
  \end{align*}
  we must have $u_x(z_0) = 0$. But
  Lemma~\ref{lem:bound}\ref{lem:bound:le} asserts that $u_x(x,\eta(x))
  < 0$ for  all $0 < x < L$. Thus the only remaining possibilities are
  that $z_0$ is the crest $\crest$ or the trough $\trough$. In either
  case, $v_n(z_n) \to v(z_0) = 0$ and $u_n(z_n) \to u(z_0) < 0$ as $n
  \to \infty$.  Sending $n\to\infty$ in  \eqref{eqn:takeme}, we get
  $u^4(g-\gamma u) \le 0$ at $z_0$, which  yields $g-\gamma u(z_0) \le
  0$. Because of the assumption  \eqref{eqn:trough}, $z_0$ cannot be
  the trough.  So $z_0$ must be the crest.   
  
  By Theorem~\ref{thm:pressure}\ref{thm:pressure:top} below, we know
  that $\pa P/\pa n <0$ along the free surface. Since $\eta_x < 0$ for
  $0 < x < L$, this  in particular implies that $P_x < 0$ on $S^+$.
  But then by the basic equations \eqref{eqn:ww:u} and \eqref{eqn:ww:top},
  we have 
  \begin{align*}
    \frac d{dx} u^2(x,\eta(x)) = 2uu_x + 2vu_y =  -2{P_x} > 0 
    \quad \text{for $0 < x < L$}. 
  \end{align*}
  This means that $u$ is strictly monotone along $S^+$. Thus
  $0> u\crest > u\trough$, so that 
  \begin{align*}
    g-\gamma u\trough 
    \le g - \gamma u \crest 
    = g-\gamma u(z_0)
    \le 0,
  \end{align*}
  contradicting \eqref{eqn:trough}. This completes the proof.  
 \end{proof}

\begin{corollary}
  Theorem~\ref{thm:main} remains true if \eqref{eqn:trough} is
  replaced by $u_{xx} \ne 0$ at the trough for nontrivial waves.
\end{corollary}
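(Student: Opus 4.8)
The plan is to re-run the proof of Theorem~\ref{thm:main} essentially verbatim, observing that \eqref{eqn:trough} is used only in the final openness argument, and there only to force a contradiction in the three situations that arise: the accumulation point $z_0$ of the minimizers $z_n$ is the trough $\trough$, it is the crest $\crest$, or the limiting wave is trivial. I would keep everything up to the point where $z_0\in\{\crest,\trough\}$ (for a nontrivial limit) is established, and replace each of the three uses of \eqref{eqn:trough}.

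The core new ingredient handles the trough case. Equation \eqref{eqn:tang} holds at every minimizer $z_n\in S^+_n$, and since $v_n(z_n)\to0$ and $w_n(z_n)=u_{nx}(z_n)/u_n(z_n)\to u_x(z_0)/u(z_0)=0$, every term on its right-hand side except $u_{nxx}$ drops out. Hence passing to the limit in \eqref{eqn:tang} gives $u_{xx}(z_0)=0$ for the limiting wave. When $z_0=\trough$ and the limiting wave is nontrivial, this \emph{directly} contradicts the new hypothesis, with no appeal to \eqref{eqn:trough} at all.

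For the trivial-limit case I would argue exactly as in the original proof: the limit of \eqref{eqn:takeme} yields $g-\gamma u\le 0$ at the trivial wave, which is excluded by the subcriticality $g-\gamma u>0$ of the trivial wave underlying the bifurcation (since the replacement concerns only nontrivial waves, the trivial wave retains this property). The crest case is the delicate one. Evaluating \eqref{eqn:ptwo} at $z_0=\crest$, where $v=0$ and $u_x=0$, and solving for $u_{xx}$ by the same manipulation that produced \eqref{eqn:conc}, gives the factored identity $u_{xx}=-\tfrac{v_x}{u^2}\bigl[2uv_x+(g-\gamma u)\bigr]$. Feeding in $u_{xx}(\crest)=0$ (from the limit of \eqref{eqn:tang}) and $g-\gamma u(\crest)\le0$ (from the limit of \eqref{eqn:takeme}), together with the sign $v_x(\crest)\ge0$ furnished by the monotonicity \eqref{eqn:mono}, forces $2uv_x+(g-\gamma u)=0$ and then $g-\gamma u(\crest)=-2uv_x(\crest)\ge0$, whence $v_x(\crest)=0$: a degenerate crest.

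The main obstacle is therefore to exclude this degenerate crest for a nontrivial wave of $\w$. Since $v>0$ in $\Omega^+\cup S^+$ by \eqref{eqn:mono} while $v$ vanishes on the vertical line $x=0$ and along $S$, which meet at a right angle at $\crest$, I would apply a Serrin-type corner (edge-point) lemma to the slope $s=v/u$, which solves the homogeneous elliptic equation \eqref{eqn:s}, to conclude that $s$—and hence $v$—cannot vanish to second order at $\crest$; this yields $v_x(\crest)\ne0$ and closes the case. An alternative, more computational route is to expand the surface quantity $2uu_y+\gamma u+g$ (which is positive on $S^+$ and, in this scenario, vanishes together with its tangential derivative at $\crest$ by the even symmetry about $x=0$) to higher order and extract a sign contradiction. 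Either way, it is the nondegeneracy of the crest, rather than the trough analysis, where the real work lies.
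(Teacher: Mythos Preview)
Your trough case matches the paper exactly: both pass to the limit in the tangential-derivative identity \eqref{eqn:tang} (equivalently, in $(u_n\pa_x+v_n\pa_y)(u_{nx}/u_n)=0$) to obtain $u_{xx}(z_0)=0$, which contradicts the new hypothesis when $z_0=\trough$.

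For the crest case, however, the paper takes a much shorter route than your proposed Serrin corner-point argument. It simply invokes Theorem~\ref{thm:pressure}\ref{thm:pressure:curvature}, which asserts $\eta_{xx}<0$ at any crest; this is proved there by applying Hopf's lemma to $P+gy$, and since $\eta_{xx}=v_x/u$ at the crest it gives $v_x>0$ directly. Your factored identity is exactly the paper's \eqref{eqn:factored} after the substitution $\eta_{xx}=v_x/u$, and with $\eta_{xx}\ne 0$ it yields $g-\gamma u\crest=-2u^2\eta_{xx}\crest>0$, contradicting the inequality $g-\gamma u(z_0)\le 0$ that both you and the paper obtain from the limit of \eqref{eqn:takeme}. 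So the ``degenerate crest'' $v_x\crest=0$ that you set out to exclude via an edge-point lemma for $s=v/u$ never arises: strict concavity of the surface at the crest is already available from the pressure analysis, and that is the missing ingredient in your outline.

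On the trivial-limit case: your appeal to ``subcriticality of the trivial wave underlying the bifurcation'' is not part of the abstract hypotheses of Theorem~\ref{thm:main}; it is a property of the particular curve constructed in Section~\ref{sec-existence} (Proposition~\ref{prop:bif}\ref{prop:bif:loc}). The paper's corollary proof does not invoke this either; it simply writes ``assume that $(u,v,P,\eta,d)$ is nontrivial'' and treats only that case.
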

\begin{proof}
  Consider $(u_n,v_n,P_n,\eta_n,d_n)$, $(u,v,P,\eta,d)$, and $z_n =
  (x_n,\eta_n(x_n))$ as in the proof of Theorem~\ref{thm:main}, and
  assume that $(u,v,P,\eta,d)$ is nontrivial and satisfies $u_{xx} \ne
  0$ at the trough. 
  Following the preceding argument we deduce that $x_n \to x_0$, 
  $z_0$ is either the crest or the trough, $u_x(z_0)=v(z_0)=0$, and
  $g-\gamma u(z_0) \le 0$. 
  Since the functions $u_{nx}/u_n$ are minimized at $z_n$, we also
  have
  \begin{align*}
    0 = (u_n\pa_x + v_n\pa_y) \frac{u_{nx}}{u_n}
    = u_n^{-2} (u_nv_nu_{nxy} - v_nu_{nx} u_{ny} +u_n^2 u_{nxx} - u_n u_{nx}^2)
    \qquad \text{ at $z_n$.}
  \end{align*}
  Taking limits by use of  \eqref{eqn:cleanconv} then yields
  $u_{xx}(z_0) = 0$.  Thus by assumption the point $z_0$ could not be the trough 
  and so  could only be the crest.  
  Now plugging $u_{xx} = u_x = v = 0$ into \eqref{eqn:ptwo}
  evaluated at the crest, we obtain the equality 
  \begin{align}
    \label{eqn:factored}
      0 &= -2u^2u_y^2 - guu_y -\gamma(3u^2 u_y + gu) - \gamma^2 u^2  
      = -u^2 \eta_{xx} (2u^2\eta_{xx} - \gamma u + g)
  \end{align}
  there, where we have substituted the formula $\eta_{xx} = (u_y+\gamma)/u$, 
  which is due to $u_y+\gamma = v_x = (\eta_x u)_x = \eta_{xx}u$ at the crest.  
  By Theorem~\ref{thm:pressure}\ref{thm:pressure:curvature},
  $\eta_{xx} < 0$ at the crest.  So \eqref{eqn:factored} implies
  \begin{align*}
    g - \gamma u\crest
    = -2u^2\eta_{xx}\crest > 0,
  \end{align*}
  which is a contradiction.
\end{proof}

\begin{corollary}\label{cor:sigma}
  Under the same conditions as in Theorem \ref{thm:main}, every
  nontrivial wave in $\cm$ satisfies $u_x < 0$ in $\Omega^+ \cup S^+$,
  as well as 
  \begin{align*}
    \left| \frac vu\right| < \sigma  
    \qquad \textup{where} \ \ \sigma^2 = 
    \max_{\overline \Omega} \frac{g-\gamma u}{g+\gamma u}.
  \end{align*}
  Note that $\sigma^2<1$ if $\gamma(0)<0$.  
\end{corollary}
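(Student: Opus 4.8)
The plan is to get the first assertion for free from the machinery already assembled, and then to establish the slope bound by locating the point where $\abs{v/u}$ is largest and exploiting the strict sign of $u_x$ there. For the first assertion, the proof of Theorem~\ref{thm:main} shows that $\cm\subset\w_0$, and Lemma~\ref{lem:bound}\ref{lem:bound:le} says that every nontrivial wave in $\w_0$ satisfies $u_x<0$ on $\Omega^+\cup S^+$; hence so does every nontrivial wave in $\cm$. It therefore remains to bound the streamline slope $s:=v/u$, and I fix a nontrivial wave in $\cm$.

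First I would show that $\abs s$ attains its maximum over $\overline\Omega$ at an interior point of $S^+$. Since $s$ solves the elliptic equation \eqref{eqn:s}, which has no zeroth-order term, the maximum principle forbids an interior minimum; because $s<0$ in $\Omega^+$ by \eqref{eqn:mono} while $s=0$ on $\pa\Omega^+\setminus S^+$ (where $v=0$), the minimum of $s$, i.e.\ the maximum of $\abs s$, is attained at some $z^\ast=(x^\ast,\eta(x^\ast))$ with $0<x^\ast<L$; the endpoints $x^\ast=0,L$ are excluded since $s=0$ there. By symmetry this is also the maximum of $\abs{v/u}$ over all of $\overline\Omega$.

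The heart of the argument is a pointwise inequality at $z^\ast$. As $\abs s$ is maximized at an interior point of $S^+$, the tangential derivative $us_x+vs_y$ vanishes there; eliminating the derivatives of $v$ through $u_x+v_y=0$ and $v_x-u_y=\gamma(0)$ (note $\psi=0$ on $S$) turns this into a linear relation among $u_x,u_y,s$, and $\gamma(0)$. Combining it with the once-differentiated Bernoulli relation \eqref{eqn:pone} lets me solve for $u_x$, and using $(1-s^2)^2+4s^2=(1+s^2)^2$ I obtain
\begin{align*}
  u_x=\frac{s\bigl[\gamma(0)(1+s^2)-\tfrac gu(1-s^2)\bigr]}{(1+s^2)^2}
  \qquad\text{at }z^\ast,
\end{align*}
where the division is legitimate because $0<1-s^2$ by Theorem~\ref{thm:main}. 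Now I invoke the first assertion: $u_x(z^\ast)<0$. Since $s(z^\ast)<0$ and $(1+s^2)^2>0$, the bracket must be strictly positive, so $\gamma(0)(1+s^2)>\tfrac gu(1-s^2)$, and multiplying by $u<0$ gives $\gamma(0)\,u\,(1+s^2)<g(1-s^2)$, which is exactly $s^2<(g-\gamma(0)u)/(g+\gamma(0)u)$ at $z^\ast$. Because $\psi=0$ on $S^+$, the right-hand side is the value of $(g-\gamma u)/(g+\gamma u)$ at $z^\ast\in\overline\Omega$, hence $\le\sigma^2$. Therefore $\max_{\overline\Omega}\abs{v/u}=\abs{s(z^\ast)}<\sigma$. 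The remark $\sigma^2<1$ when $\gamma(0)<0$ follows since $\gamma'\le0$ forces $\gamma(\psi)\le\gamma(0)<0$ throughout, so $\gamma u$ is bounded below by a positive constant on the compact set $\overline\Omega$.

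I expect the main obstacle to be verifying that $z^\ast$ is a genuine interior point of $S^+$, so that the tangential derivative really vanishes and the strict inequality $u_x(z^\ast)<0$ is available. The mild surprise is that no second-order (Hopf) information is needed: once the first-order extremality condition is combined with \eqref{eqn:pone}, the sign of $u_x$ alone forces the sharp slope bound.
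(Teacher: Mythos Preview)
Your argument is correct and takes a somewhat different route from the paper. The paper introduces the auxiliary function $f=\alpha u^2-v^2$ with the specific choice $\alpha=\sigma^2$, derives on the free surface (using \eqref{eqn:pone} to eliminate $u_y$) the identity $uf_x+vf_y=(\alpha+1)(u^2+v^2)u_x-[\alpha(\gamma u+g)+\gamma u-g]\,v$, observes that the right side is nonpositive because $u_x<0$ and the definition of $\sigma^2$ forces $\alpha(\gamma u+g)+\gamma u-g\ge 0$, and then closes by the mechanism of Lemma~\ref{lem:easy}. You instead work directly with $s=v/u$: the elliptic equation \eqref{eqn:s} pins the extremum at an interior point $z^\ast$ of $S^+$, the first-order condition there together with \eqref{eqn:pone} yields an explicit formula for $u_x(z^\ast)$, and the strict sign $u_x(z^\ast)<0$ reads off the pointwise bound $s^2<(g-\gamma u)/(g+\gamma u)$ at $z^\ast$. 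Your approach makes the emergence of the threshold $(g-\gamma u)/(g+\gamma u)$ transparent without having to guess the right $\alpha$ in advance, and in fact gives the slightly sharper statement that the bound already holds with the ratio evaluated at the single surface point $z^\ast$; the paper's approach is a cleaner parallel to Lemma~\ref{lem:easy} and avoids having to verify separately that the extremum of $s$ lies strictly inside $S^+$.
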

\begin{proof}
  We have already proven that $u_x < 0$ in $\Omega^+ \cup S^+$.  Now
  consider the function $f=\alpha u^2-v^2$ for some $0<\alpha \le 1$.
  Differentiating it and using  \eqref{eqn:pone} to eliminate $u_y$,
  we obtain 
  \begin{align} \label{alphaid}
    uf_x+vf_y  
    =  (\alpha+1)(v^2+u^2)u_x  
    -  \{\alpha(\gamma u+g) + \gamma u-g\} v.     
  \end{align}
  We define $\alpha  = \sigma^2$. Then, since $\gamma u+g>0$, the
  expression in \eqref{alphaid} is at most zero in $\Omega^+$. As in
  Lemma~\ref{lem:easy}, we deduce the stated inequality on the slope.  
\end{proof}

\section{Existence of waves}  \label{sec-existence}

We briefly discuss the question of existence of waves with vorticity
for which there is a bound on the slope.  In \cite{cs:exact} the
following construction of waves in $\w$ is proven.  Given $c$, $m$,
wavelength $2L$ and a smooth function $\gamma\le 0$, there exists a
connected set $\cm$ of waves satisfying \eqref{eqn:ww},
\eqref{eqn:nostag} and \eqref{eqn:mono} such that $\cm$ contains
exactly one trivial wave as well as a sequence of waves for which
$\sup u_n \nearrow c$.  Under the assumption $\gamma \le 0$, there are
no restrictions on $c$, $m$, and $L$ for the existence of $\cm$ (see
Example 3.4 in \cite{constantin:book}).  
(Note however that the definition $\gamma$ in that reference differs from ours by a sign.) 
Connectedness is taken in the same sense as above. (Actually in
\cite{cs:exact} the amount of regularity is less but the extra
regularity is very easy to prove.) In fact, $\cm$ contains a
continuous curve $\K$ in function space with the same properties.
This was proven later in the irrotational case in \cite{bt:analytic}
and in the rotational case with surface tension in
\cite{walsh:stratcap2}.

Before discussing the relationship between Theorem~\ref{thm:main} and
$\cm$, we make a few definitions related to Bernoulli's law.
First, we define a function $\Gamma \in C^3[-m,0]$ in terms of
$\gamma$ by
\begin{align*}
  \Gamma(-\psi) = \int_0^{-\psi}\gamma(-p)\, dp.
\end{align*}
In terms of $\Gamma$, which follows from \eqref{eqn:ww}, 
Bernoulli's law can be written
\begin{align}
  \label{eqn:Bernoulli}
  P - \patm + \frac{u^2+v^2}2 + g(y+d) - \Gamma(-\psi) \equiv Q
\end{align}
for some constant $Q$ sometimes called the ``total head''. For trivial
flows with fixed vorticity function $\gamma$ and flux $m$, it is
shown, for instance in \cite{cs:exact}, that the speed squared
$\lambda = u^2_0(0)$ at the free surface and the total head $Q$ are
related by $Q = \qt(\lambda)$, where 
\begin{align*}
  \qt(\lambda) := \frac \lambda 2 + 
  g\int_{-m}^0 \frac{ds}{\sqrt{\lambda + 2\Gamma(s)}}.
\end{align*}
We easily check $\qt$ is a strictly convex function of $\lambda >
-2\mn\Gamma$, with a unique minimum at $\lambda = \lc$.

Now let $\gamma$ satisfy \eqref{eqn:mastergam}, and let $\cms$ be the
set of waves in $\cm$ such that \eqref{eqn:trough} holds, which means
that $g-\gamma u >0$ at the trough.  

\begin{proposition}\label{prop:bif}\hfill
  \begin{enumerate}[label=\textup{(\alph*)}]
  \item\label{prop:bif:loc} $\cms$ contains $\cm_\loc$,
    the part of $\cm$ sufficiently close to the trivial wave.  
  \item\label{prop:bif:small} If $\gamma(0)$ is sufficiently
    small, namely, 
    \begin{align}
      \label{eqn:gammasmall}
      \gamma^2(0) < \frac {g^2}{2gL+ \lc}  
    \end{align}
    then $\cms = \cm$.  
  \end{enumerate}
\end{proposition}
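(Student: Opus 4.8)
My plan is to deduce both parts from a single continuation argument patterned on the proof of Theorem~\ref{thm:main}, in which the role played there by \eqref{eqn:trough} for nontrivial waves is taken over by an a priori bound on the speed at the crest. For part~\ref{prop:bif:loc} I would simply note that \eqref{eqn:trough} is an \emph{open} condition: the trough value $F:=(g-\gamma u)|_{\trough}$ is a continuous function on $\tau(\cm)$, being assembled from the components $u$ and $v_x-u_y=\gamma(0)$ of $\tau$ evaluated at $x=L$. By the last assertion of Theorem~\ref{Theorem1} the unique trivial wave in $\cm$ has $F>0$, so by continuity $F>0$ on a $\tau$-neighborhood of it; that neighborhood is $\cm_\loc$, giving $\cm_\loc\subset\cms$.

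The heart of part~\ref{prop:bif:small} is the a priori bound
\[
  u^2 < \lc \qquad\text{at the crest } \crest,
\]
valid for \emph{every} nontrivial wave in $\w$ (and requiring neither \eqref{eqn:trough} nor the smallness of $\gamma(0)$). To prove it I would work along the two symmetry lines $x=0$ and $x=L$, where $v=0$, and study $\mu:=u^2-2\Gamma(-\psi)$. Using $u_y=v_x-\gamma$ one finds $\mu_y=2uv_x$; since $v$ is odd about both $x=0$ and $x=L$ and positive on $\Omega^+\cup S^+$ by \eqref{eqn:mono}, we have $v_x\ge0$ on the crest line and $v_x\le0$ on the trough line, so (as $u<0$) $\mu$ is nonincreasing in $y$ below the crest and nondecreasing below the trough. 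Writing each depth as $\int_0^m d\psi/\abs u$ and comparing with the trivial-flow depth $d(\lambda)=\int_{-m}^0 ds/\sqrt{\lambda+2\Gamma(s)}$, this monotonicity gives $\qt(u^2\crest)\ge Q\ge\qt(u^2\trough)$, where $Q$ is the Bernoulli constant \eqref{eqn:Bernoulli} of the wave. Because $u^2\crest<u^2\trough$ (the crest is higher, hence slower, by \eqref{eqn:Bernoulli}) while $\qt$ is strictly convex with minimum at $\lc$, these inequalities force $u^2\crest<\lc$.

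With this bound available I would re-run the continuation argument of Theorem~\ref{thm:main} to show $\cm\subset\w_0$. As there, $\tau(\cm)\cap\tau(\w_0)$ is relatively closed (from the non-strict inequality defining $\w_0$), contains the trivial wave, and it remains to prove it relatively open in the connected set $\tau(\cm)$. Arguing by contradiction exactly as in that proof produces waves $w_n\in\cm\setminus\w_0$ with $\tau(w_n)\to\tau(w)$, $w\in\cm\cap\w_0$, and a limit point $z_0$---the crest, the trough, or a surface point if $w$ is trivial---at which $g-\gamma u\le0$. If $w$ is trivial this contradicts Theorem~\ref{Theorem1}. If $z_0$ is the crest, the bound above gives $g^2\le\gamma^2(0)u^2\crest<\gamma^2(0)\lc$; if $z_0$ is the trough, then $w\in\w_0$ is nontrivial, so Lemmas~\ref{lem:easy} and \ref{lem:bound}\ref{lem:bound:le} give the slope bound, whence $u^2\trough<u^2\crest+2gL<\lc+2gL$ via \eqref{eqn:Bernoulli} and so $g^2\le\gamma^2(0)u^2\trough<\gamma^2(0)(2gL+\lc)$. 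In every case \eqref{eqn:gammasmall} yields $g^2<g^2$, a contradiction, so $\cm\subset\w_0$. Then every nontrivial wave has $u_x<0$ and hence slope $<1$, so the same estimate gives $u^2\trough<\lc+2gL$; by \eqref{eqn:gammasmall} this means $\gamma^2(0)u^2\trough<g^2$, i.e.\ \eqref{eqn:trough} holds (trivial waves satisfy \eqref{eqn:trough} by Theorem~\ref{Theorem1}), and therefore $\cms=\cm$.

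The main obstacle is the crest bound $u^2\crest<\lc$. The delicate points are choosing the monotone quantity $\mu=u^2-2\Gamma(-\psi)$ and getting the two depth comparisons in \emph{opposite} directions, so that the common Bernoulli constant $Q$ is trapped as $\qt(u^2\crest)\ge Q\ge\qt(u^2\trough)$; once this is set up, strict convexity of $\qt$ closes the argument cleanly. Everything else is a routine adaptation of the proof of Theorem~\ref{thm:main}, with the smallness hypothesis \eqref{eqn:gammasmall} substituting for the trough assumption \eqref{eqn:trough}.
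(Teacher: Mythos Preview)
Your approach to part~\ref{prop:bif:loc} has a genuine gap: you cite the parenthetical assertion in Theorem~\ref{Theorem1} that the trivial wave satisfies $(U-c)\gamma<g$ everywhere, but that assertion is precisely what part~\ref{prop:bif:loc} sets out to establish, and Theorem~\ref{Theorem1} is only the informal statement whose content is proved elsewhere. The paper's proof of~\ref{prop:bif:loc} is substantive: for the bifurcating trivial wave one has $u_0^2(0)=\lambda<\lc$ from \cite{cs:exact}, so it suffices to show $\gamma^2(0)\lc<g^2$. This is done by computing $\qt'(g^2/\gamma^2(0))$ and bounding it below using the convexity inequality $\Gamma(p)\ge\gamma(0)p$, which follows from $\gamma'\le0$. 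Without this calculation there is no reason the trivial wave in $\cm$ satisfies \eqref{eqn:trough}; the sign assumptions \eqref{eqn:mastergam} are genuinely used. The same gap recurs when you invoke Theorem~\ref{Theorem1} for the ``$w$ trivial'' case in your argument for part~\ref{prop:bif:small}, though there it is easily repaired since the hypothesis \eqref{eqn:gammasmall} already gives $\gamma^2(0)\lc<g^2$.

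Your argument for part~\ref{prop:bif:small} is otherwise correct but takes a different route from the paper. The paper shows directly that $\cms$ is open and closed in the connected set $\cm$: openness is immediate from the strict inequality, and for closedness a limit of waves in $\cms$ (each with slope $<1$ by Theorem~\ref{thm:main}) has slope $\le 1$, whence $u^2\trough\le u^2\crest+2gL<\lc+2gL$ using the crest bound cited from \cite{cs:exact}, and \eqref{eqn:gammasmall} closes the loop. You instead re-run the continuation argument of Theorem~\ref{thm:main} from scratch with \eqref{eqn:gammasmall} in place of \eqref{eqn:trough}, and you supply your own proof of the crest bound $u^2\crest<\lc$ via the monotone quantity $\mu=u^2-2\Gamma(-\psi)$ on the symmetry lines and the trapping $\qt(u^2\crest)\ge Q\ge\qt(u^2\trough)$. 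Your route is more self-contained---it re-derives the crest bound rather than citing \cite{cs:exact}---while the paper's is shorter because it uses Theorem~\ref{thm:main} and \cite{cs:exact} as black boxes.
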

\begin{proof}
  For \ref{prop:bif:loc} it suffices by continuity to prove that 
  \begin{align}
    \label{eqn:lambdagoal}
    g-\gamma \sqrt\lambda = g-\gamma u\trough > 0 
  \end{align}
  holds for the trivial wave in $\cm$. From \cite{cs:exact} we know
  that $\lambda < \lc$ for the trivial wave in $\cm$, so it
  is enough to show 
  \begin{align}
    \label{eqn:predesired}
    \gamma^2(0) \lc < g^2 .
  \end{align}
  If $\gamma(0) = 0$ then \eqref{eqn:predesired} is trivially
  satisfied, so assume that $\gamma(0) < 0$. 
  Then by the convexity of $\qt$ and the definition of $\lc$,
  \eqref{eqn:predesired} is equivalent to the inequality 
  \begin{align}
    \label{eqn:desired}
    0 =   2\qt'(\lc)  <            2\qt'\Big( \frac{g^2}{\gamma^2(0)} \Big) 
    = 1 - g\int_{-m}^0 
    \frac{dp}{(g^2/\gamma^2(0) + 2\Gamma(p))^{3/2}}.
  \end{align}
  Now by our assumptions \eqref{eqn:mastergam} on the vorticity
  function $\gamma$, $\Gamma'(0) = \gamma(0) < 0$, and moreover
  $\Gamma''(p) = -\gamma'(-p) \ge 0$ so that $\Gamma$ is convex and
  $\Gamma(p)\ge\gamma(0)p$.  Thus
  \begin{align*}
    2\qt'\Big( \frac{g^2}{\gamma^2(0)} \Big) 
    &\ge 
    1-g\int_{-m}^0 \frac{dp}{(g^2/\gamma^2(0) + 2\gamma(0)p)^{3/2}}\\
    &= \frac g{\abs{\gamma(0)}\sqrt{g^2/\gamma^2(0) - 2\gamma(0)m}} >
    0
  \end{align*}
  as desired.

  It remains to prove \ref{prop:bif:small}. Since $\cms \subset \cm$
  is nonempty and $\cm$ is connected, it suffices to show that $\cms$
  is both relatively open and relatively closed as a subset of $\cm$.
  From its definition (see \eqref{eqn:trough}), $\cms$ is clearly
  relatively open, so it remains to show that it is relatively closed.
  So consider a wave which is a limit point of $\cms$. Since waves in
  $\cms$ satisfy $\abs{\eta_x} < 1$,  this limiting wave must have
  $\abs{\eta_x} \le 1$. Evaluating Bernoulli's law
  \eqref{eqn:Bernoulli} both at the crest $\crest$ and the trough
  $\trough$, we deduce that
  \begin{align}
    \label{eqn:reduceme}
    u^2\trough = u^2\crest + 2g[\eta(0) - \eta(L)]  
    \le u^2\crest + 2gL.
  \end{align}
  But it was shown in \cite{cs:exact} that all the waves in $\cm$
  satisfy $u^2\crest < \lc$.   So \eqref{eqn:reduceme} implies
  $u^2\trough \le \lc + 2gL$. Rearranging this inequality and using
  the assumption \eqref{eqn:gammasmall}, we obtain $g-\gamma u\trough
  > 0$.  That is, the limiting wave in fact lies in $\cms$.
\end{proof}

Using the same argument as in the proof of
Proposition~\ref{prop:bif}\ref{prop:bif:loc}, one can
show that \eqref{eqn:gammasmall} holds whenever
\begin{align}
  \label{eqn:gammasmallest}
  \frac 1{\sqrt{1-2L\gamma^2(0)/g}}
  - \frac 1{\sqrt{1-2L\gamma^2(0)/g-2\gamma^3(0)m/g^2}} < 1.
\end{align}
Moreover, \eqref{eqn:gammasmall} and \eqref{eqn:gammasmallest} are
equivalent when $\gamma$ is constant.

\bigskip An open problem is the following question.  When $\gamma(0)$
is large enough that \eqref{eqn:gammasmall} is violated, does
\eqref{eqn:trough} still hold for all waves in $\cm$? In other words,
is $\cms=\cm$ always true?

\section{Solitary waves}  \label{sec-solitary}
In this short section we show how the arguments of
Sections~\ref{sec:periodic} and \ref{sec-existence} can be
modified for waves which are solitary rather than periodic. By a
solitary wave we mean a solution to \eqref{eqn:ww} (with $u=U-c,\
v=V$) in the unbounded domain
\begin{align*}
  \Omega = \{(x,y) \in \R^2 : -\infty < x < \infty,\ -d < y < \eta(x)\} 
\end{align*}
with the additional asymptotic condition that
\begin{align}
  \label{eqn:asym}
  D^k v \to 0, 
  \quad 
  \eta \to 0,
  \quad 
  D^k u(x,y) \to D^k u_\infty(y)
  \quad 
  \text{ as } x \to \pm\infty,
  \qquad k=0,1,2,
\end{align}
uniformly in $y$, for some function $u_\infty(y)$. Here $D^k$ denotes
any derivative of order $k$ with respect to $x$ or $y$.  As before we
will assume that $u,\eta$ are even in $x$ and that $v$ is odd in $x$.
We continue to assume \eqref{eqn:nostag}, $\sup_\Omega u < 0$, which
in particular implies $\max u_\infty < 0$. We will assume  the
regularity $\eta \in C^4_\bdd(\R)$, $u,v,P \in
C^3_\bdd(\overline{\Omega})$, and $u_\infty \in C^3[-d,0]$. The
notation $C^k_\bdd$ indicates functions whose derivatives up to order
$k$ are bounded and continuous.  The topology is that of uniform
convergence of those derivatives.  

Letting $\Omega^+$, $S^+$, and $B^+$ denote the right halves of
the fluid domain, free surface, and bed,
\begin{align*}
  \Omega^+ &= \{(x,y) \in \R^2 :  x > 0 ,\ -d < y < \eta(x)\},\\
  S^+ &= \{(x,\eta(x)) : x > 0  \},\\
  B^+ &= \{(x,-d) : x > 0 \},
\end{align*}
(see Figure~\ref{fig:omegaplus}b), we will continue to assume that the
strict monotonicity \eqref{eqn:mono} holds, that is $v > 0$ in
$\Omega^+ \cup S^+$, for all nontrivial waves. The vorticity function
$\gamma$ is defined exactly as before, so that $\gamma=-(u_\infty)_y$,
and will be assumed to satisfy \eqref{eqn:mastergam}. 

\begin{definition}[The water wave set $\w$ for solitary waves] 
  In this section we denote by $\w$  the set of waves satisfying the
  above assumptions, or more precisely the set of tuples
  $(u,v,P,\eta,u_\infty,d)$ with $d > 0$, $\eta \in C^4(\R)$, $u_\infty \in C^3_\bdd[-d,0]$, and $u,v,P \in
  C^3_\bdd(\overline\Omega)$ for which $u,P,\eta$ are even in $x$,  and $v$ are odd
  in $x$, which satisfy \eqref{eqn:ww}, \eqref{eqn:asym},
  \eqref{eqn:nostag}, and \eqref{eqn:mastergam}, and which either
  satisfy the monotonicity condition \eqref{eqn:mono} or are trivial.
\end{definition}

We also define the mapping $\tau$ in an analogous way, namely, 
\begin{align*}
  \tau \colon \w \to \big(C^0_\bdd[0,\infty)\big)^7,
  \qquad 
  \tau(u,v,P,\eta,u_\infty,d)(x) = (u,v,u_x,v_x,u_y,u_{xx},u_{xy})(x,\eta(x)).
\end{align*}

\begin{theorem}\label{thm:main:sol} 
  Let $\cm \subset \w$ be a set of solitary water waves such that
  $\tau(\cm)$ is a connected subset of $(C^0[0,\infty))^7$. If $\cm$
  contains at least one trivial wave, and if all the waves in $\cm$
  satisfy
  \begin{align}
    \label{eqn:trough:sol}
    g-\gamma u_\infty(0) > 0
  \end{align}
  then the slope $\abs{v/u}$ of any streamline of any wave in $\cm$ is $<1$.
\end{theorem}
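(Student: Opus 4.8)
The plan is to mirror the periodic proof of Theorem~\ref{thm:main} essentially verbatim, replacing the compact interval $[0,L]$ and the bounded half-domain $\Omega^+$ with the unbounded solitary analogues, and to isolate the two places where unboundedness genuinely changes the argument: the maximum-principle step and the compactness step. First I would establish the solitary analogues of the two supporting lemmas. Lemma~\ref{lem:easy} carries over with no change, since its proof only examines $f=\tfrac12(u^2-v^2)$ on $\overline{\Omega^+}$ and uses the sign conditions on $u,v,\gamma$ together with $u_x<0$; the boundary $\pa\Omega^+\setminus S^+$ now consists of the vertical line $x=0$, the bed $B^+$, and the ``end'' at $x\to+\infty$, but on all three $v=0$ (by symmetry on $x=0$ and $B^+$, and by the asymptotic condition \eqref{eqn:asym} as $x\to\infty$), so $f=u^2/2>0$ there as before. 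One must confirm that the infimum of $f$ is attained, which follows because as $x\to\infty$ the wave approaches a trivial wave where $f\to u_\infty^2/2>0$, so any putative nonpositive infimum occurs in a compact region.

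Next I would prove the solitary version of Lemma~\ref{lem:bound}. The boundary identities \eqref{eqn:pone} and \eqref{eqn:ptwo} are local to $S$ and hold unchanged. The functions $w=u_x/u$ and $s=v/u$ still satisfy the elliptic equations \eqref{eqn:w} and \eqref{eqn:s}, and the key algebraic reduction producing \eqref{eqn:conc}--\eqref{eqn:useme} at an interior minimizing point $z_0\in S^+$ is purely pointwise, so it survives. The one substantive change is the maximum principle: on the unbounded $\Omega^+$ I cannot simply equate $\inf_{\Omega^+}w$ with $\inf_{\pa\Omega^+}w$, since the infimum might escape to spatial infinity. Here the asymptotic condition \eqref{eqn:asym} rescues the argument, because as $x\to\pm\infty$ we have $u_x\to0$, hence $w=u_x/u\to 0$ and $s\to0$; thus the relevant extrema cannot be approached only at infinity unless the extreme value is $0$, and the Hopf-lemma analysis on $B^+$ and the lateral boundary $x=0$ goes through as before. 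I would phrase this as a Phragmén--Lindelöf-type observation: the infimum of $w$ over $\overline{\Omega^+}$ is either $0$ or attained at a point of $S^+$, which is exactly what the periodic proof needed.

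With both lemmas in hand, the proof of Theorem~\ref{thm:main:sol} follows the proof of Theorem~\ref{thm:main} line by line. I define $\w_0\subset\w$ as the waves with $u_x\le0$ on $S^+$; connectedness of $\tau(\cm)$ plus nonemptiness (the trivial wave), relative closedness (from the nonstrict inequality), and relative openness reduce everything to ruling out a limiting wave in $\tau(\cm)\cap\tau(\w_0)$ that is a limit of waves in $\cm\setminus\w_0$. For such a limit one extracts minimizing points $z_n\in S^+_n$ satisfying the inequality \eqref{lem:bound:bound}, and the trivial, crest, and trough cases are dispatched by sending $n\to\infty$ and invoking \eqref{eqn:trough:sol} together with the pressure results of Theorem~\ref{thm:pressure}, exactly as before. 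The main obstacle, and the step I expect to require genuine care, is the compactness argument: in the periodic case the minimizers $z_n=(x_n,\eta_n(x_n))$ live in the compact set $S^+\cup\{$crest, trough$\}$ parametrized by $x_n\in[0,L]$, so one extracts a convergent subsequence for free, whereas for solitary waves $x_n$ ranges over $[0,\infty)$ and could diverge. I would handle this by showing that the escape $x_n\to\infty$ is impossible: as $x_n\to\infty$ one has $u_{nx}(z_n)\to0$ and $v_n(z_n)\to0$, so the bracket in \eqref{eqn:takeme} tends to $gu_\infty^4(0)-\gamma u_\infty^5(0)=u_\infty^4(0)\bigl(g-\gamma u_\infty(0)\bigr)$, which is strictly positive by \eqref{eqn:trough:sol}, contradicting the strict negativity in \eqref{eqn:takeme}. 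Hence $x_n$ stays in a bounded set, a convergent subsequence exists, and $z_0$ is the crest, the trough, or an interior point of $S^+$, recovering precisely the periodic trichotomy.
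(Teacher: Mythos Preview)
Your proposal is correct and follows essentially the same route as the paper's own proof: the paper likewise ports Lemmas~\ref{lem:easy} and \ref{lem:bound} to the solitary setting by using \eqref{eqn:asym} to control behavior at infinity, and handles the failure of compactness exactly as you suggest, first ruling out $x_n\to\infty$ via \eqref{eqn:trough:sol} before extracting a convergent subsequence. The only cosmetic differences are that in the solitary case there is no finite trough (so only the crest case remains once $x_n$ is bounded and the limiting wave is nontrivial), and you should cite the solitary pressure result Theorem~\ref{thm:pressure:sol} rather than Theorem~\ref{thm:pressure}.
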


Recall that $u_\infty(0)$ is the relative velocity of the fluid on the surface at infinity.  
The proof of this theorem will follow from the following lemmas.  

\begin{lemma}\label{lem:easy:sol}
  If a wave in $\w$  satisfies $u_x < 0$ in $\Omega^+ \cup S^+$, then
  $\sup_{\overline\Omega} \abs{v/u} < 1$.
\end{lemma}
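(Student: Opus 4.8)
The plan is to mimic the periodic argument in Lemma~\ref{lem:easy} as closely as possible, with the only new issue being the behavior at spatial infinity in the unbounded half-domain $\Omega^+$. As before, set $f = \tfrac12(u^2 - v^2)$, so that the claimed bound $\sup_{\overline\Omega}\abs{v/u} < 1$ is equivalent to $f > 0$ on $\overline{\Omega^+}$. Differentiating and using incompressibility $u_x + v_y = 0$ together with $v_x - u_y = \gamma$ to eliminate $v$-derivatives gives exactly the same identity as \eqref{eqn:hasasign}, namely
\begin{align*}
  uf_x + vf_y = (u^2+v^2)u_x - \gamma uv < 0
  \quad \text{ in } \Omega^+ \cup S^+,
\end{align*}
which is strictly negative because $u_x < 0$ by hypothesis, $u<0$ and $v\ge0$ by \eqref{eqn:nostag} and \eqref{eqn:mono}, and $\gamma\le0$ by \eqref{eqn:mastergam}. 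On the finite portion of the boundary $\pa\Omega^+$ the sign analysis is unchanged: by symmetry and the kinematic condition, $v=0$ on the vertical line $x=0$ and on the bed $B^+$, so $f = u^2/2 > 0$ there by \eqref{eqn:nostag}.

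First I would handle the new boundary piece at infinity. As $x\to+\infty$, the asymptotic conditions \eqref{eqn:asym} force $v\to0$ and $u\to u_\infty(y)$ uniformly in $y$, so $f \to \tfrac12 u_\infty^2(y)$, which is bounded below by a positive constant because $\sup_\Omega u < 0$ implies $\max u_\infty < 0$. Hence there is an $x_1 > 0$ and a constant $\delta > 0$ with $f \ge \delta > 0$ for all $x \ge x_1$ and all $-d \le y \le \eta(x)$. It therefore suffices to establish $f > 0$ on the bounded region $\overline{\Omega^+} \cap \{0 \le x \le x_1\}$, on which the argument becomes a compact minimization exactly as in the periodic case.

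Next I would run the contradiction argument of Lemma~\ref{lem:easy} verbatim on this truncated region. Suppose $\min f \le 0$ on $\overline{\Omega^+}$; by the infinity estimate and the boundary computations, the minimum can only be attained at some point $(x^*,y^*)$ lying in $\Omega^+\cup S^+$ with $0 \le x^* \le x_1$. If the minimizer is interior, it is a critical point of $f$, so the left-hand side $uf_x + vf_y$ vanishes there, contradicting its strict negativity. If it lies on the surface $S^+$, then the tangential derivative $\tfrac{d}{dx}f(x,\eta(x))$ vanishes, so $uf_x + vf_y = u\,\tfrac{d}{dx}f(x,\eta(x)) = 0$ at $(x^*,y^*)$, again contradicting strict negativity. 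Hence $\min f > 0$ and the lemma follows.

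The only genuinely new ingredient beyond the periodic proof is the decay step: one must rule out the minimum escaping to $x=+\infty$, and I expect this to be the main (though mild) obstacle. The key point is that $\sup_\Omega u < 0$ gives a uniform negative upper bound on $u_\infty$, so the limiting value $\tfrac12 u_\infty^2$ of $f$ is uniformly bounded away from zero; combined with the uniform convergence in \eqref{eqn:asym} this confines the infimum to a compact set where the classical maximum-principle argument applies. No new elliptic estimates or Hopf-lemma arguments are needed here, since this lemma only uses the sign of the first-order expression $uf_x + vf_y$; the heavier machinery is reserved for the solitary analogue of Lemma~\ref{lem:bound}.
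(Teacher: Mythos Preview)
Your proposal is correct and follows essentially the same approach as the paper's proof: both set $f=\tfrac12(u^2-v^2)$, use the identity $uf_x+vf_y=(u^2+v^2)u_x-\gamma uv<0$, check positivity on $\partial\Omega^+\setminus S^+$, invoke the asymptotics \eqref{eqn:asym} to rule out a nonpositive minimum at infinity, and then conclude by the interior/surface contradiction from Lemma~\ref{lem:easy}. Your version is simply more explicit about the truncation step (introducing $x_1$ and $\delta$), whereas the paper summarizes this as ``$\lim_{x\to\infty}f=u_\infty^2/2>0$'' and notes that the minimum cannot be achieved at infinity.
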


\begin{proof} 
   As in Lemma \ref{lem:easy}, the function $f = \frac 12(u^2 - v^2)$
   has $f=u^2/2 > 0$ on $\pa\Omega^+\setminus S^+$.   By
   \eqref{eqn:asym}, $f$ is a  bounded function and $\lim_{x\to\infty}
   f  =  u_\infty^2/2 >0$.  Differentiating $f$, we find 
   \begin{align*}
     uf_x + vf_y = (u^2+v^2)u_x - \gamma uv < 0
     \quad \text{ on } \Omega^+ \cup S^+ .
   \end{align*}
   Suppose for the sake of contradiction that
   $\min_{\overline{\Omega^+}} f \le 0$.  Then the minimum could not
   be achieved on $\pa\Omega^+\setminus S^+$ nor at infinity, but only
   at some point $(x^*,y^*)$ in $\Omega^+ \cup S^+$.   The proof
   concludes exactly as in Lemma \ref{lem:easy}.  
\end{proof}

As in Section \ref{sec:periodic} we let $\w_0$ denote the set of waves
in $\w$ for which $u_x \le 0$ on $S^+$. The set $\w_0$ contains all
the trivial waves in $\w$, and $(u,v,P,\eta,u_\infty,d)$ in $\w$ lies in $\w_0$
if and only if $\tau(u,v,P,\eta,u_\infty,d)$ lies in $\tau(\w_0)$. 

\begin{lemma}[Bounds along $S^+$]\label{lem:bound:sol}
  \hfill
  \begin{enumerate}[label=\textup{(\alph*)}]
  \item\label{lem:bound:sol:le} For every nontrivial wave in $\w_0$, $u_x
    < 0$ on $\Omega^+ \cup S^+$.
  \item\label{lem:bound:sol:gtr} For every nontrivial wave in $\w
    \setminus \w_0$, $\inf_{\Omega^+} u_x/u < 0$ is achieved at a
    point $z_0 \in S^+$ where 
    \begin{align}
      g(u^4-4u^2v^2 -v^4) - \gamma u^3(u^2 + 5v^2) < 0.
    \end{align}
  \end{enumerate}
\end{lemma}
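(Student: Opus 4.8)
The plan is to rerun the proof of Lemma~\ref{lem:bound} essentially verbatim, observing that none of its local ingredients use periodicity, and to insert one new argument to handle the noncompactness of the half-domain. First I would introduce the same two functions $w = u_x/u$ and $s = v/u$. Since the interior equations \eqref{eqn:ww} are unchanged, $w$ and $s$ satisfy the identical elliptic equations \eqref{eqn:w} and \eqref{eqn:s} on $\Omega^+$; in particular \eqref{eqn:w} shows $w$ is a supersolution. Likewise the boundary identities \eqref{eqn:pone} and \eqref{eqn:ptwo}, coming only from differentiating the dynamic condition along $S$, hold here, as does the local quadratic relation \eqref{eqn:conc} at any point of $S^+$ where the tangential derivative of $w$ vanishes. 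The only genuine change is that $\Omega^+$ is now unbounded in $x$, so the existence and location of $I := \inf_{\Omega^+} w$ must be re-examined.

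For the boundary behavior I would argue as before on the finite parts: by symmetry $w = 0$ on the line $x = 0$, while applying the Hopf lemma to $s$ on $B^+$ (where $s = 0$ and $s < 0$ in $\Omega^+$) gives $0 > s_y = -w$, hence $w > 0$ on $B^+$. The new ingredient is the behavior as $x \to \infty$: by the asymptotic conditions \eqref{eqn:asym} we have $u_x \to 0$ and $u \to u_\infty \le \max u_\infty < 0$ uniformly in $y$, so $w = u_x/u \to 0$ uniformly as $x \to \infty$. Consequently, if $I < 0$ then choosing $R$ so that $\abs w < -I/2$ for $x > R$ confines the infimum to the compact set $\overline{\Omega^+} \cap \{x \le R\}$, where it is attained. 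The strong maximum principle then rules out an interior minimum, and $w \ge 0$ on $B^+ \cup \{x=0\}$ rules out those pieces, so a negative infimum must be attained at some $z_0 \in S^+$.

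At such a $z_0$, the tangential derivative of $w$ along the surface vanishes and the Hopf lemma gives $w_y(z_0) < 0$, so \eqref{eqn:conc} yields $A w(z_0)^2 + B w(z_0) + C < 0$ with $A, C > 0$ (as computed from \eqref{eqn:nostag}, \eqref{eqn:mono}, and \eqref{eqn:mastergam}). This forces $w(z_0) < 0$ and, reading off the sign of the middle coefficient $B$, the inequality asserted in part \ref{lem:bound:sol:gtr}. Part \ref{lem:bound:sol:le} then follows because a wave in $\w_0$ has $w \ge 0$ on $S^+$, so $I$ cannot be negative; combined with the exclusion of interior and $S^+$ minima, this upgrades $w \ge 0$ to $w > 0$ on $\Omega^+ \cup S^+$. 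Part \ref{lem:bound:sol:gtr} follows because a wave in $\w \setminus \w_0$ has $w < 0$ at some point of $S^+$, forcing $I < 0$, which is therefore attained on $S^+$ at a point satisfying the stated inequality. I expect the only real obstacle to be the noncompactness of $\Omega^+$, specifically the need to rule out that the infimum escapes to infinity; the uniform decay in \eqref{eqn:asym} resolves this cleanly, and everything else is inherited unchanged from Lemma~\ref{lem:bound}.
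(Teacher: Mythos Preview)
Your proposal is correct and matches the paper's own proof essentially verbatim: the paper also reruns Lemma~\ref{lem:bound}, noting that the elliptic equations \eqref{eqn:w}--\eqref{eqn:s}, the Hopf-lemma argument on $B^+$, and the computation \eqref{eqn:conc} at a surface minimum carry over unchanged, with the single added observation that $w = u_x/u \to 0$ uniformly as $x\to+\infty$ by \eqref{eqn:asym}, so a negative infimum must be attained on $S^+$. Your treatment of the noncompactness (truncating to $\{x\le R\}$) is slightly more explicit than the paper's, but the idea is the same.
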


\begin{proof}  
  We follow the proof of Lemma~\ref{lem:bound}. Consider any
  nontrivial wave in $\w$. As before $w=u_x/u$ and $s=v/u$ satisfy the
  elliptic equations \eqref{eqn:w} and \eqref{eqn:s}, and applying the
  Hopf lemma to $s$ on $B^+$ yields $w < 0$ there. We also have $w=0$
  on $x=0$ by symmetry, and $w \to (u_\infty)_x/u_\infty = 0$ as $x
  \to +\infty$, uniformly in $y$, by \eqref{eqn:asym}. In particular,
  if $I = \inf_{\Omega^+} w < 0$, this infimum must be achieved at
  some point on $S^+$. The rest of the proof now proceeds exactly as
  in the proof of Lemma~\ref{lem:bound}.
\end{proof}

\begin{proof}[Proof of Theorem~\ref{thm:main:sol}]
  We follow the proof of Theorem~\ref{thm:main}. As before it suffices
  to show that $\tau(\cm) \cap \tau(\w_0)$ is relatively open in
  $(C^0_\bdd[0,\infty))^7$. Assume the contrary.  Then as before there
  would exist a sequence $(u_n,v_n,P_n,\eta_n,u_\infty,d_n) \in \cm \setminus
  \w_0$ and $(u,v,P,\eta,u_\infty,d) \in \cm \cap \w_0$ such that 
  \begin{align}
    \label{eqn:conv:sol}
    \tau(u_n,v_n,P_n,\eta_n,u_\infty,d_n) \longrightarrow \tau(u,v,P,\eta,u_\infty,d) 
    \quad 
    \text{ in $(C^0[0,\infty))^7$ }
  \end{align}
  Note that for each $n$, $\lim_{x\to\infty} (u_n)_x/u_n = 0$
  uniformly.  Applying
  Lemma~\ref{lem:bound:sol}\ref{lem:bound:sol:gtr} to
  $(u_n,v_n,P_n,\eta_n,d_n)$ for a fixed $n$, we know that the
  function $u_{nx}/u_n$ achieves a negative minimum over
  $\overline{\Omega^+_n}$ at a point $z_n = (x_n,\eta_n(x_n))\in
  S^+_n$, $x_n > 0$, at which we have 
  \begin{align}
    \label{eqn:takeme:sol}
    \Big[g(u^4_n-4u^2_nv^2_n -v^4_n) 
    - \gamma_n u^3_n(u^2_n + 5v^2_n)\Big](z_n) < 0.
  \end{align}

  Suppose first that $x_n \to +\infty$. Then the uniform convergence, 
  \eqref{eqn:conv:sol} and \eqref{eqn:asym}, imply that 
  \begin{align}
    \label{eqn:cleanconv:solinf}
    (u_n,v_n,u_{nx},v_{nx},u_{ny},u_{nxx},u_{nxy})(z_n)
    \longrightarrow (u_\infty(0),0,0,0,(u_\infty)_y(0),0,0)
  \end{align}
  because 
  \begin{align*}
    \abs{u_n(x_n,y_n) - u_\infty(0)}
    &\le 
    \abs{u_n(x_n,\eta_n(x_n)) -u(x_n,\eta(x_n))}
    + \abs{u(x_n,\eta(x_n)) - u_\infty(0)}\\
    &\le 
    \sup_x \abs{u_n(x,\eta_n(x))-u(x,\eta(x))}
     + \abs{u(x_n,0) - u_\infty(0)}
     + C \abs{\eta(x_n)},
  \end{align*}
  and similarly for the other components in
  \eqref{eqn:cleanconv:solinf}. Taking $n \to \infty$ in
  \eqref{eqn:takeme:sol} therefore yields $g^4(0)-\gamma u_\infty^5(0)
  \le 0$ and hence $g-\gamma u_\infty(0) \le 0$, contradicting
  \eqref{eqn:trough:sol}. 

  So $x_n$ must be bounded.  By compactness we
  may now assume that $x_n$ converges to some $x_0 \ge 0$. The uniform
  convergence \eqref{eqn:conv:sol} then implies that that 
  \begin{align}
    \label{eqn:cleanconv:sol}
    (u_n,v_n,u_{nx},v_{nx},u_{ny},u_{nxx},u_{nxy})(z_n)
    \longrightarrow (u,v,u_x,v_x,u_y,u_{xx},u_{xy})(z_0)
  \end{align}
  where $z_0 = (x_0,\eta(x_0))$ lies on the free surface. In case
  $(u,v,P,\eta,u_\infty,d)$ is trivial, \eqref{eqn:cleanconv:sol}
  reduces to \eqref{eqn:cleanconv:solinf} and we get a contradiction
  as in the preceding paragraph. So $(u,v,P,\eta,u_\infty,d)$ must be
  nontrivial. As in the proof of Theorem~\ref{thm:main}, we have $0 <
  u_{nx}(z_n) \to u_x(z_0) \le 0$ as $n\to\infty$ so that $u_x(z_0) =
  0$. Lemma~\ref{lem:bound:sol}\ref{lem:bound:sol:le} now implies that
  $z_0$ must be the crest $\crest$. Applying Theorem
  \ref{thm:pressure:sol}\ref{thm:pressure:sol:top}, we obtain
  \begin{align*}
    \frac d{dx} u^2(x,\eta(x)) = 2uu_x + 2vu_y =  -2{P_x} > 0
    \quad \text{for $x > 0$}.
  \end{align*}
  Thus $0 > u\crest > u_\infty(0)$ and hence
  \begin{align*}
    g-\gamma u_\infty(0) \le g - \gamma u \crest = g-\gamma u(z_0) \le 0,
  \end{align*}
  contradicting \eqref{eqn:trough:sol}.
\end{proof}

\begin{corollary}\label{cor:sigma:sol}
  Under the same conditions as in Theorem \ref{thm:main:sol}, every
  nontrivial wave in $\cm$ satisfies $u_x < 0$ in $\Omega^+ \cup S^+$,
  as well as 
  \begin{align*}
    \left| \frac vu\right| < \sigma  
    \qquad \textup{where} \ \ \sigma^2 = 
    \max_{\overline \Omega} \frac{g-\gamma u}{g+\gamma u}.
  \end{align*}
\end{corollary}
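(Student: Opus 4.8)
The plan is to follow the proof of Corollary~\ref{cor:sigma} line by line, replacing Theorem~\ref{thm:main}, Lemma~\ref{lem:easy}, and Lemma~\ref{lem:bound} by their solitary counterparts Theorem~\ref{thm:main:sol}, Lemma~\ref{lem:easy:sol}, and Lemma~\ref{lem:bound:sol}, and then taking care of the unbounded domain. First I would record the assertion $u_x < 0$ in $\Omega^+ \cup S^+$: the proof of Theorem~\ref{thm:main:sol} already shows $\cm \subset \w_0$, and for any nontrivial wave in $\w_0$ the strict inequality $u_x < 0$ on $\Omega^+ \cup S^+$ is precisely Lemma~\ref{lem:bound:sol}\ref{lem:bound:sol:le}. (Trivial waves have $v \equiv 0$, so the slope bound is vacuous for them.) This settles the first claim and supplies the sign of $u_x$ needed below.

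Next I would set $\alpha = \sigma^2$ and examine $f = \alpha u^2 - v^2 = u^2(\alpha - s^2)$, where $s = v/u$ is the streamline slope, since the desired bound $\abs{v/u} < \sigma$ is equivalent to $f > 0$, i.e.\ $s^2 < \alpha$, throughout $\overline\Omega$. Because $\gamma \le 0$ and $u < 0$ force $\gamma u \ge 0$, the denominator obeys $g + \gamma u \ge g > 0$, so $\sigma^2$ is well defined with $\sigma^2 \le 1$. Differentiating $f$ tangentially along $S^+$ and using \eqref{eqn:pone} to eliminate $u_y$ reproduces the identity \eqref{alphaid} verbatim, the derivation being purely algebraic and insensitive to whether the wave is periodic or solitary. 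With the choice $\alpha = \sigma^2 \ge (g-\gamma u)/(g+\gamma u)$ the bracket $\alpha(\gamma u + g) + \gamma u - g$ is nonnegative, so the term $-\{\cdots\}v$ is $\le 0$; and since $u_x < 0$ the term $(\alpha+1)(u^2+v^2)u_x$ is strictly negative. Hence $uf_x + vf_y < 0$ on $S^+$, which (as $u < 0$) makes $f(x,\eta(x))$ strictly increasing away from the crest; since $f = \alpha u^2 > 0$ at the crest, we conclude $f > 0$, that is $s^2 < \sigma^2$, on all of $\overline{S^+}$.

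The remaining step, and the main obstacle, is to promote this surface bound to the whole half-period on an unbounded domain. The natural device is the elliptic equation \eqref{eqn:s}, which has no zeroth-order term and, by nonstagnation, bounded coefficients; this is exactly the tool already exploited in Lemma~\ref{lem:bound:sol}. Since $s < 0$ in $\Omega^+$ by \eqref{eqn:mono}, while $s = 0$ on $B^+$ and on $\{x=0\}$ and $s \to 0$ as $x \to +\infty$ by \eqref{eqn:asym}, maximizing $s^2$ amounts to minimizing $s$, and the minimum of $s$ is attained on the boundary together with the limit at infinity. Thus $\sup_{\overline{\Omega^+}} s^2 = \sup_{\overline{S^+}} s^2 < \sigma^2$, the strict inequality surviving passage to the supremum because $s^2 \to 0$ at infinity, so that the supremum over the noncompact set $\overline{S^+}$ is governed by a compact portion of it together with the decay. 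To make the boundary reduction rigorous I would exhaust $\Omega^+$ by the truncations $\Omega^+ \cap \{x < R\}$, apply the weak maximum principle on each (on the new face $\{x = R\}$ the values of $s$ tend to $0$ uniformly in $y$), and let $R \to \infty$ --- the same Phragm\'en--Lindel\"of bookkeeping implicit in Lemma~\ref{lem:bound:sol}. By symmetry the bound then holds on $\overline\Omega$. As throughout Section~\ref{sec-solitary}, the real work is controlling the slope as $x \to \infty$, not any new interior estimate.
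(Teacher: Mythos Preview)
Your proposal is correct and follows the paper's route, which simply refers back to Corollary~\ref{cor:sigma} with Lemma~\ref{lem:easy} replaced by Lemma~\ref{lem:easy:sol}. Your explicit appeal to the maximum principle for $s=v/u$ via \eqref{eqn:s} (together with the truncation $\{x<R\}$ and $R\to\infty$) to carry the surface bound into $\Omega^+$ is a clean way to flesh out the step the paper abbreviates as ``as in Lemma~\ref{lem:easy}''; since \eqref{alphaid} uses the surface relation \eqref{eqn:pone} and so is valid only on $S$, the interior case indeed needs exactly this device (equivalently, $f\le 0$ forces $s^2\ge\alpha$, and \eqref{eqn:s} pins the maximum of $s^2$ to $S^+$).
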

\begin{proof}
  The proof is exactly the same as the proof of
  Corollary~\ref{cor:sigma} for periodic waves, with
  Lemma~\ref{lem:easy} replaced by Lemma~\ref{lem:easy:sol}.
\end{proof}

In \cite{wheeler:solitary,wheeler:froude}, the following construction
of solitary waves in $\w$ is proven.  Given $c$, $d$, and a smooth
negative function $u_\infty^*$ of $-d \le y \le 0$ with
$(u_\infty^*)_y \ge 0$ and satisfying the normalization condition
\begin{align*}
  g \int_{-d}^0 \frac{dy}{(u_\infty^*)^2(y)} = 1,
\end{align*}
there exists a connected set $\cm$ of waves satisfying
\eqref{eqn:ww}, \eqref{eqn:nostag}, \eqref{eqn:mono}, and
\eqref{eqn:asym} such that $\cm$ contains exactly one trivial wave as
well as a sequence of waves for which $\sup u_n \nearrow c$. Here the
asymptotic horizontal velocity $u_\infty(y)$ in \eqref{eqn:asym} is
given by 
\begin{align*}
  u_\infty(y) = Fu_\infty^*(y) 
\end{align*}
for some positive non-dimensional parameter
$F$, called the Froude number,  which varies along $\cm$. The trivial wave in $\cm$ has $F
= 1$, while the nontrivial waves in $\cm$ have $1 < F < 2$.

Now suppose that $u_0 = u_\infty^*$ satisfies \eqref{eqn:masteru0}, in
which case the vorticity function $\gamma$ of any wave in $\cm$
satisfies \eqref{eqn:mastergam}. Let $\cms$ be the set of waves in
$\cm$ such that \eqref{eqn:trough:sol} holds, which means that 
$g-\gamma u_\infty(0) >0$.

\begin{proposition}\label{prop:bif:sol}\hfill
  \begin{enumerate}[label=\textup{(\alph*)}]
  \item\label{prop:bif:sol:loc} $\cms$ contains $\cm_\loc$,
    the part of $\cm$ sufficiently close to the trivial wave.  
  \item\label{prop:bif:sol:F} $\cms$ contains all waves in $\cm$
    satisfying the bound
    \begin{align}
      \label{eqn:Fsmall}
      F^2 < \frac g{\abs{(u_\infty^*)_yu_\infty^*}(0)}.
    \end{align}
  \item\label{prop:bif:sol:gamma} If $u_\infty^*$ satisfies
    \begin{align}
      \label{eqn:Fallsmall}
      \abs{(u_\infty^*)_y u_\infty^*}(0)  < \frac g4
    \end{align}
    then \eqref{eqn:Fsmall} always holds, so that $\cms = \cm$.
  \end{enumerate}
\end{proposition}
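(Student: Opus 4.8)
The plan is to reduce all three parts to the single scalar inequality \eqref{eqn:Fsmall} and then to prove that inequality at the bifurcation point $F=1$. The decisive observation is that for a wave in $\cm$ the surface vorticity and surface velocity are explicit in terms of the Froude number: since $\gamma = -(u_\infty)_y$ and $u_\infty = Fu_\infty^*$, evaluating at the free surface (where $\psi=0$ and $y=0$) gives $\gamma(0) = -F(u_\infty^*)_y(0)$ and $u_\infty(0) = Fu_\infty^*(0)$. Substituting these into the defining inequality \eqref{eqn:trough:sol} of $\cms$ and using $(u_\infty^*)_y(0)\ge 0$ and $u_\infty^*(0)<0$ (so that $(u_\infty^*)_y(0)\,u_\infty^*(0) = -\abs{(u_\infty^*)_y u_\infty^*}(0)$), I obtain
\[
  g - \gamma u_\infty(0) = g - F^2\abs{(u_\infty^*)_y u_\infty^*}(0).
\]
Hence a given wave satisfies \eqref{eqn:trough:sol} if and only if it satisfies \eqref{eqn:Fsmall}; this is precisely part \ref{prop:bif:sol:F}, and it reduces the whole proposition to controlling the product $F^2\abs{(u_\infty^*)_y u_\infty^*}(0)$.

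With this reduction in hand, part \ref{prop:bif:sol:gamma} is immediate: every wave in $\cm$ has $F<2$, so $F^2<4$, and \eqref{eqn:Fallsmall} then forces $F^2\abs{(u_\infty^*)_y u_\infty^*}(0) < 4\cdot(g/4)=g$, i.e.\ \eqref{eqn:Fsmall} holds for every wave, whence $\cms=\cm$ by part \ref{prop:bif:sol:F}. For part \ref{prop:bif:sol:loc}, I note that $\cms$ is cut out of $\cm$ by the strict inequality \eqref{eqn:trough:sol}, whose left side depends continuously on $F$ (and hence on the wave); thus $\cms$ is relatively open in $\cm$, and it suffices to show the \emph{trivial} wave lies in $\cms$, the neighborhood $\cm_\loc$ following automatically. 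Since the trivial wave has $F=1$, the reduction above shows this amounts to the single inequality $\abs{(u_\infty^*)_y u_\infty^*}(0) < g$.

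The heart of the argument is therefore this last inequality, which I plan to extract from the normalization $g\int_{-d}^0 (u_\infty^*)^{-2}\,dy = 1$ together with the monotonicity and concavity of $u_\infty^*$ recorded in \eqref{eqn:masteru0}. Writing $h=u_\infty^*$, so $h<0$, $h'\ge 0$, and $h''\le 0$, concavity makes $h'$ nonincreasing, so $h'(0)\le h'(y)$ on $[-d,0]$. Dividing by $h^2>0$ and recognizing $h'/h^2 = -(1/h)'$, integration yields
\[
  h'(0)\int_{-d}^0 \frac{dy}{h^2}
  \le \int_{-d}^0 \Big(-\frac{1}{h}\Big)'\,dy
  = \frac{1}{\abs{h(0)}} - \frac{1}{\abs{h(-d)}},
\]
where I have used $h(0),h(-d)<0$. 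Multiplying by $\abs{h(0)}$, discarding the strictly positive term $\abs{h(0)}/\abs{h(-d)}$, and invoking the normalization in the form $\int_{-d}^0 h^{-2}\,dy = 1/g$ gives $\abs{h'(0)}\,\abs{h(0)} < g$, which is exactly the desired bound (the case $h'(0)=0$ being trivial since $g>0$). This estimate plays the role, for solitary waves, that the convexity of $\qt$ played in Proposition~\ref{prop:bif}\ref{prop:bif:loc} for periodic waves.

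The main obstacle is this final estimate: the signs must be tracked carefully throughout because $u_\infty^*$ is negative, and strictness has to be preserved so that the trivial wave satisfies \eqref{eqn:trough:sol} with a genuine gap, which is what drives the openness argument in part \ref{prop:bif:sol:loc}. By contrast, once the reduction to \eqref{eqn:Fsmall} is established, parts \ref{prop:bif:sol:F} and \ref{prop:bif:sol:gamma} are essentially bookkeeping, relying only on the bound $F<2$ built into the construction of $\cm$.
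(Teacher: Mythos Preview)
Your proof is correct. Parts \ref{prop:bif:sol:F} and \ref{prop:bif:sol:gamma} match the paper's argument exactly, as does the reduction of part \ref{prop:bif:sol:loc} via continuity to showing that the trivial wave itself satisfies \eqref{eqn:trough:sol}.

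Where you genuinely diverge is in the verification of $\abs{(u_\infty^*)_y u_\infty^*}(0) < g$ at the trivial wave. The paper does not argue directly: it observes that the trivial solitary wave has $\lambda = \lc$ and then invokes the periodic argument from Proposition~\ref{prop:bif}\ref{prop:bif:loc}, which goes through the convexity of $\qt$, the identity $\qt'(\lc)=0$, and the lower bound $\Gamma(p)\ge\gamma(0)p$ to conclude $\gamma^2(0)\lc < g^2$. Your route is more self-contained and elementary: you exploit the normalization $g\int_{-d}^0 (u_\infty^*)^{-2}\,dy = 1$, which is specific to the solitary construction, together with the concavity $u_{\infty yy}^* \le 0$, and obtain the inequality by a single integration by parts. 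This avoids the $\qt$ machinery entirely and makes transparent exactly which of the hypotheses \eqref{eqn:masteru0} is being used (only the second). The paper's approach, on the other hand, unifies the periodic and solitary cases under a single computation, at the cost of importing more notation.
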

\begin{proof}
  For \ref{prop:bif:sol:loc} it suffices by continuity to prove that
  \eqref{eqn:trough:sol} holds for the trivial wave in $\cm$. From
  \cite{wheeler:solitary} we know that this wave has, in the notation
  of Section~\ref{sec-existence}, $\lambda = \lc$, so the
  rest of the proof proceeds exactly as in the proof of
  Proposition~\ref{prop:bif}\ref{prop:bif:loc}.
  To prove~\ref{prop:bif:sol:F}, we simply notice that, by the scaling
  $u_\infty = Fu_\infty^*$ and the definition of $\gamma$, we have 
  \begin{align*}
    g-\gamma u_\infty(0)
    = g+[(u_\infty)_y\, u_\infty](0)
    = g+F^2[(u_\infty^*)_y\, u_\infty^*](0) 
    = g - F^2\abs{(u_\infty^*)_y(0)\,u_\infty^*(0)}.
  \end{align*}
  The remaining statement \ref{prop:bif:sol:gamma} then follows
  immediately from \ref{prop:bif:sol:F} and the inequality $F < 2$ in
  \cite{wheeler:froude}.
\end{proof}
In the case of constant vorticity, the dimensionless vorticity
$\tilde\gamma := \gamma d/\abs{u_\infty(0)}^{1/2} < 0$ is constant
along $\cm$, and \eqref{eqn:Fallsmall} is equivalent to
$\abs{\tilde\gamma} < 1/3$.

\section{Some general inequalities on the pressure} \label{sec-pressure}
\subsection{Periodic case} \label{sec-pressure:periodic}

Some of the following facts are already known under various
assumptions but others appear to be new.  In fact, under certain
assumptions, versions of \ref{thm:pressure:basic} and
\ref{thm:pressure:moretop} appear in \cite{varv:conjecture},  and
a version of  \ref{thm:pressure:bed} appears in
\cite{constantin:heights}.

In this section, for the sake of generality, we assume only that the
free surface is a $C^2$ curve which does not self-intersect, along
which $y>-d$, and which is horizontally periodic with period $2L$. We
let $\Omega$ be the region between $S$ and $B=\{y=-d\}$, and require
that the equations \eqref{eqn:ww} hold in $\Omega$, with the kinematic
boundary condition \eqref{eqn:ww:top} replaced by the condition that
$(u,v)$ is tangent to $S$. Here we continue to denote $u=U-c,\ v=V$
and we define the vorticity to be $\omega = v_x - u_y$. We assume the
regularity and periodicity $u,v \in C^1_\per(\overline\Omega)$ and $P
\in C^2_\per(\overline\Omega)$, where as before ``$\per$'' denotes
$2L$-periodicity in $x$. 

We do {\it not} assume that $S$ is a graph.  Thus {\it the waves could
be overturning}.  Nor do we assume that the wave is symmetric in any
way, or that \eqref{eqn:nostag} or the monotonicity conditions
\eqref{eqn:mono} hold. Finally, we {\it not} assume that $\omega$ is a
function of $\psi$, nor that it has any particular sign.

Define
\begin{align*}
 \mx\eta = \max_S y, \qquad \mn\eta= \min_S y. 
\end{align*}
Any point on $S$ for which $y=\mx\eta$ is called a {\it crest}, while
any point on $S$ for which $y=\mn\eta$ is called a {\it trough}.

As before, incompressibility allows us to define a stream function
$\psi$, which by the boundary conditions can be taken to vanish on the
free surface $S$ and to be equal to some other constant $m$, not
necessarily positive, on the bed $B$.

\begin{theorem}\label{thm:pressure}
  Consider any nontrivial solution (that is, with $S$ not a horizontal
  line) to \eqref{eqn:ww} in $\Omega$ in the above sense with $u^2
  +v^2 \ne 0$.
  \begin{enumerate}[label=\textup{(\alph*)}]
  \item \label{thm:pressure:basic} 
    The pressure satisfies 
    \begin{align}
      \label{eqn:pressure:basic}
      g\mn \eta \le P-\patm+gy \le g\mx\eta,
    \end{align}
    with equality only at crests or troughs.
  \item \label{thm:pressure:curvature} 
    The free surface is strictly concave at any crest and  strictly
    convex at any trough.
  \item \label{thm:pressure:bed}
    $\mx \eta - \mn \eta > \frac 1g (\max_B P - \min_B P)$.
  \item \label{thm:pressure:below}
    $P > \patm$ at all depths below the troughs.
  \item \label{thm:pressure:top} 
    If $\omega u + g \ge 0$, then $P \ge
    \patm$ with equality only on the free surface $S$, on which $\pa P/\pa n <0$. 
  \item \label{thm:pressure:moretop} 
    If $\mx\omega (u^2+v^2) -4gu \ge 0$, 
    then $P + \frac 12 \mx \omega \psi \ge \patm$ with
    equality only on the free surface. 
    For instance, this is true if   $\mx\omega > 0$ and $u \le 0$.
  \item \label{thm:pressure:pos} 
    If $\omega \ge 0$ in the fluid, while $u < 0$ at either a crest or a trough,
    then the relative speed $\sqrt{u^2+v^2}$ in the fluid is maximized
    at all troughs.  
  \item \label{thm:pressure:neg} 
    If $\omega \le 0$ in the fluid, while $u<0$ at either a crest or a trough,
    then the relative speed $\sqrt{u^2+v^2}$ in the fluid is minimized
    at all crests.
  \end{enumerate}
\end{theorem}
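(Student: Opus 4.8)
My plan is to prove all eight parts of Theorem~\ref{thm:pressure} with a single toolkit --- the strong maximum principle and the Hopf boundary-point lemma --- applied to a different auxiliary function in each part. Everything rests on two identities obtained from \eqref{eqn:ww} alone. Differentiating the Euler equations and using incompressibility to cancel the third derivatives of $(u,v)$ gives the pressure identity
\[
  \Delta P = -2(u_x^2 + u_y v_x) = -2(u_x^2 + u_y^2) - 2\omega u_y,
\]
while re-expressing the advective term through the vorticity yields the Bernoulli gradient identity
\[
  \nabla\!\left(P + \tfrac12(u^2+v^2) + gy\right) = \omega(v,-u) = -\omega\,\nabla\psi .
\]
I will also use the boundary facts $P=\patm$ on $S$ and, since $v\equiv0$ on the flat bed $B$ forces $v_x=0$ there, the Euler equation \eqref{eqn:ww:v} gives the Neumann-type identity $P_y=-g$ (that is $\pa_y(P+gy)=0$) on $B$.

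The heart of the matter is \ref{thm:pressure:basic}. I set $h=P-\patm+gy$, so that $h=gy$ on $S$ and $\pa_y h=0$ on $B$; on $S$ the extremes of $gy$ are attained exactly at crests (maximum) and troughs (minimum), so it suffices to show $h$ takes its extreme values only on $S$. The obstruction is that $\Delta P$ is \emph{not} sign-definite. I get around this by noting that $\nabla h = -J(u,v)$ and $\Delta h = 2\det J$, where $J$ is the Jacobian of $(u,v)$; hence at any interior critical point $(u,v)\in\ker J$, and since $u^2+v^2$ is bounded below away from $0$ (no stagnation) this forces $\det J=0$, i.e. $\Delta h=0$ at every critical point. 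Quantitatively $\abs{\det J}\le C\,\abs{\nabla h}$ with $C=\|J\|_\infty/\inf(u^2+v^2)^{1/2}$, so $h$ solves a homogeneous uniformly elliptic equation $\Delta h + b\cdot\nabla h=0$ with bounded drift $b$. The strong maximum principle then forbids interior extrema, and Hopf's lemma together with $\pa_y h=0$ forbids extrema on the interior of $B$, proving \ref{thm:pressure:basic}. Parts \ref{thm:pressure:bed} and \ref{thm:pressure:below} are then immediate corollaries: on $B$ one has $h=P-\patm-gd$, so the strict version of \ref{thm:pressure:basic} off the crest/trough gives $\max_B P-\min_B P=\max_B h-\min_B h< g(\mx\eta-\mn\eta)$; and below the troughs $y<\mn\eta$ is never a trough, so $h>g\mn\eta$ gives $P-\patm>g(\mn\eta-y)>0$.

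For \ref{thm:pressure:top} the bounded-drift trick fails, because $\Delta P$ does \emph{not} vanish at critical points of $P$; here I use instead the \emph{sign} of $\Delta P$ there. At a critical point $\nabla P=0$ determines $u_x,u_y$ from the Euler equations, and substitution into the pressure identity collapses to
\[
  \Delta P = -\,\frac{2g\,(g+\omega u)}{u^2+v^2}\qquad\text{at any critical point of }P,
\]
which under the hypothesis $\omega u+g\ge0$ is $\le0$. Thus $P$ has no interior local minimum, its minimum lies on $\pa\Omega$, and the bed is excluded by $P_y=-g<0$; hence $\min P=\patm$ is attained only on $S$, and Hopf gives $\pa P/\pa n<0$ there. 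Part \ref{thm:pressure:moretop} runs identically for $\Phi=P+\tfrac12\mx\omega\,\psi$ (still equal to $\patm$ on $S$ since $\psi=0$ there), the analogous critical-point computation producing a multiple of $\mx\omega(u^2+v^2)-4gu$ whose assumed sign kills interior minima. Finally \ref{thm:pressure:curvature} is local: at a crest or trough the tangent to $S$ is horizontal, so $v=0$, and differentiating $P=\patm$ twice along $S$ (the computation that produced \eqref{eqn:ptwo}) together with the sign of $\pa P/\pa n$ from \ref{thm:pressure:top} pins the sign of the curvature $\eta_{xx}$.

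For the speed statements \ref{thm:pressure:pos}--\ref{thm:pressure:neg} the key observation is that $S$ is the \emph{single} streamline $\psi\equiv0$, so by the Bernoulli identity $P+\tfrac12(u^2+v^2)+gy$ is constant along it; since $P=\patm$ on $S$, the surface speed satisfies $\tfrac12(u^2+v^2)=\mathrm{const}-gy$ there, an affine decreasing function of height, and is therefore largest at troughs and smallest at crests \emph{among surface points}. It then remains to push the global extremum of $q:=\tfrac12(u^2+v^2)$ onto $S$: I would run a maximum principle for $q$, excluding the bed (where $v=0$ and $q_y=-u\omega$), so that the global maximum when $\omega\ge0$, or minimum when $\omega\le0$, lands on $S$, hence at a trough or crest respectively, with the hypothesis $u<0$ at one crest or trough used to locate the extremum correctly. \textbf{The main obstacle} is exactly this last maximum principle: computing at a critical point of $q$ gives $\Delta q=\omega^2-\nabla\psi\cdot\nabla\omega$, and the term $\nabla\psi\cdot\nabla\omega$ is not controlled when $\omega$ is a general function on $\Omega$ rather than a function of $\psi$. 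Taming this term --- either through the bounded-drift device that worked for \ref{thm:pressure:basic} or through a dedicated argument keyed to the sign of $\omega$ --- is where the real work lies, and the same sign-indefiniteness of $\Delta P$ is what makes \ref{thm:pressure:basic} and \ref{thm:pressure:top} delicate; resolving it uniformly is the crux of the whole theorem.
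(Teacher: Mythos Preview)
Your approach to \ref{thm:pressure:basic}--\ref{thm:pressure:below} is essentially the paper's: both apply the strong maximum principle and Hopf lemma to $h=P+gy$ after observing it satisfies a uniformly elliptic equation with bounded drift. The paper writes this explicitly as
\[
  (u^2+v^2)\,\Delta h + 2(h_x-\omega v)h_x + 2(h_y+\omega u)h_y = 0,
\]
which is your estimate $\abs{\Delta h}\le C\abs{\nabla h}$ unpacked with continuous coefficients. One correction: for \ref{thm:pressure:curvature} you invoke $\pa P/\pa n<0$ from \ref{thm:pressure:top}, but \ref{thm:pressure:top} carries the extra hypothesis $\omega u+g\ge 0$, which \ref{thm:pressure:curvature} does not assume. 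The paper instead applies Hopf to $h$ itself at a crest (where $h$ is maximized by \ref{thm:pressure:basic}), obtaining $0<h_y=-u^2\eta_{xx}$ directly. Similarly, for \ref{thm:pressure:top}--\ref{thm:pressure:moretop} your critical-point identity $\Delta P=-2g(g+\omega u)/(u^2+v^2)$ is correct, but ``$\Delta P\le 0$ at critical points'' alone does not give the \emph{strong} minimum principle when the inequality is non-strict; the paper closes this by shifting the equation for $h$ by $gy$ to get
\[
  (u^2+v^2)\Delta P + 2(P_x-\omega v)P_x + 2(P_y+\omega u+2g)P_y = -2g(\omega u+g)\le 0,
\]
so that the strong principle and Hopf apply directly.

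The substantive gap is \ref{thm:pressure:pos}--\ref{thm:pressure:neg}. You correctly diagnose that a maximum principle for $q=\tfrac12(u^2+v^2)$ runs into the uncontrolled term $\nabla\psi\cdot\nabla\omega$. The paper does not attempt this; the missing idea is to \emph{integrate} the Bernoulli gradient identity you already wrote down. Setting $\Gamma=P-\patm+q+gy-K$, one has $u\Gamma_x+v\Gamma_y=0$, so $\Gamma$ is constant on streamlines; choose $K$ so $\Gamma\equiv 0$ on $S$. Along the trajectory $\dot z=\nabla\psi/\abs{\nabla\psi}^2$ from $S$ to any interior point $z_0$ one computes
\[
  \Gamma(z_0)=-\int_{-\psi(z_0)}^{0}\omega(z(s))\,ds,
\]
so $\omega\ge 0$ forces $\Gamma\le 0$ (and $\omega\le 0$ forces $\Gamma\ge 0$). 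Then $u^2+v^2\le u^2+v^2-2\Gamma = C-2(P+gy)$, and part \ref{thm:pressure:basic} already bounds $P+gy$ below by its trough value, where $\Gamma=0$ turns the chain into an equality. Thus no elliptic argument for $q$ is ever needed: all the maximum-principle work was done in \ref{thm:pressure:basic}. The hypothesis $u<0$ at a crest or trough enters only to orient $\psi$ (forcing $\psi\ge 0$ in $\Omega$ with $m>0$, so the integration limits make sense), not to locate an extremum of $q$.
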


\begin{proof}
  \ref{thm:pressure:basic} 
  Consider the function $f = P+gy$. Since $f = \patm + gy$ on the
  free surface, clearly it is nonconstant. A straightforward though
  tedious calculation shows that 
  \begin{align}\label{P+gy eq}
    (u^2+v^2) \Delta f 
    + 2(f_x - \omega v) f_x + 2(f_y + \omega u) f_y = 0.
  \end{align}
  Since $u^2+v^2 \ne 0$, the maximum principle implies that $f$ can
  only achieve its maximum and minimum values on the boundary. Since
  on the bed $y=-d$ we have $v=0$ and 
  \begin{align*}
    f_y = P_y + g = - uv_x - vv_y = 0,
  \end{align*}
  the Hopf lemma implies that the maximum and minimum of $f$ must be
  achieved on the free surface. But the pressure $P$ is constant on
  the free surface, so $f$ is maximized at all crests  
  and minimized at all troughs. 
  Rewriting this in terms of the pressure, we
  obtain \eqref{eqn:pressure:basic}  as desired.

  \ref{thm:pressure:below} 
  From the lower bound in \eqref{eqn:pressure:basic}, we see that
  \begin{align*}
    P-\patm \ge g(\mn\eta - y),
  \end{align*}
  and hence in particular that $P > \patm$ at all depths
  below the troughs, which is \ref{thm:pressure:below}.  

  \ref{thm:pressure:bed}
  Evaluating the inequalities in \eqref{eqn:pressure:basic} at the
  points along the bed where the pressure $P$ is maximized and
  minimized, we also find 
  \begin{align*}
    \textstyle\max_B P - \patm - gd < g\mx\eta, 
    \qquad 
    \min_B P - \patm - gd > g\mn\eta.
  \end{align*}
  Subtracting these two inequalities yields \ref{thm:pressure:bed}. 

  \ref{thm:pressure:curvature}
  Consider any crest.  In a neighborhood of the crest, $y$ is
  locally solvable as a function of $x$.  [Indeed, let $S$ be
  parametrized as $(\alpha(s),\beta(s))$ where $(\alpha')^2 +
  (\beta')^2 \ne 0$.  At the crest $\beta'=0$ so that $\alpha' \ne
  0$.  So we can locally solve $y = \beta(\alpha^{-1}(x)) =:
  \eta(x)$.]  Now at the crest $\eta_x = 0$  so we must have $v = 0$
  and $u \ne 0$ due to our assumption $u^2+v^2 \ne 0$. Applying the
  Hopf lemma to $f$ at the crest, we find that 
  \begin{align*}
    0 < f_y = P_y + g = -uv_x =  -u^2\eta_{xx} 
  \end{align*}
  and hence that $\eta_{xx} < 0$.  Thus the free surface is strictly
  concave at that point.   An analogous argument shows that
  $\eta_{xx} > 0$ at all troughs.  The emphasis here is on the
  strictness of the inequalities.  

  \ref{thm:pressure:top}  
  We note from \eqref{P+gy eq} and the assumption $\omega u + g \ge
  0$ that 
  \begin{align*}
    (u^2+v^2)\Delta P + 2(P_x-\omega v)P_x + 2(P_y + \omega u + 2g)P_y =
    -2g(\omega u + g) \le 0 
  \end{align*}
  in $\Omega$ and hence that $P$ achieves it minimum either on the
  free surface or on the bed $y=-d$. On $y=-d$ we have $P_y = -g <
  0$, so the minimum cannot occur there. Thus $P$ is minimized on
  the free surface, where it is constant. By the Hopf lemma, $\pa
  P/\pa n <0$ along the free surface $S$ where the normal $n$ points
  away from $\Omega$.

  \ref{thm:pressure:moretop}
  Consider the function $f = P + \frac 12 \mx \omega \psi$. Yet
  another direct and somewhat tedious calculation shows that $f$
  satisfies
  \begin{align*}
    2(u^2+v^2) \Delta f + 4af_x + 4bf_y
    = - (\mx\omega-\omega)(\mx\omega(u^2+v^2)-4gu) \le 0,
  \end{align*}
  where the coefficients $a$ and $b$ are 
  \begin{align*}
    a = f_x+(\mx\omega-\omega)v,
    \qquad 
    b = f_y-(\mx\omega-\omega)u+2g.
  \end{align*}
  So by the maximum principle $f$ can only be minimized on the
  free surface or the bed $y=-d$. On the bed, 
  \begin{align*}
    f_y = P_y + \tfrac 12 \mx \omega v = -g < 0,
  \end{align*}
  so the minimum cannot occur there. Thus $f$ is minimized on the
  free surface where it takes the constant value $\patm$.

  \ref{thm:pressure:pos} and  \ref{thm:pressure:neg}. 
  By our assumption $u^2+v^2 \ne 0$, the stream function $\psi$ has 
  no critical points in $\Omega$, so it must attain its maximum
  or its minimum on the free boundary, where it is constant. By
  assumption there is a crest or trough where $u < 0$, so that $\psi_x
  = v = 0$ there while $\psi_y = \pa \psi / \pa n = u < 0$. Thus
  $\psi$ must be minimized along the free surface where it vanishes, 
  and maximized along the bed where its value is $m > 0$.
  Now consider the function $\Gamma(x,y)$ defined by 
  \begin{align*}
    \Gamma(x,y) = P - \patm + \frac{u^2+v^2}2 + gy - K.
  \end{align*}
  This is a generalization of the function $\Gamma$ of Section \ref{sec-existence}.
  Note  that $  \Gamma_x = \omega v, \ \Gamma_y = -\omega u, $ and
  hence $u\Gamma_x + v\Gamma_y = 0$.  Thus $\Gamma$ is constant along
  streamlines. In particular, $\Gamma$ is constant along the free
  surface and also along the bed, and we can choose the constant $K$
  so that $\Gamma$ vanishes on the free surface. 

  We claim that if $\omega\ge0$ in $\Omega$, then $\Gamma\le0$ in
  $\Omega$, and similarly if $\omega \le 0$ then $\Gamma\ge 0$. To
  prove this, choose any $z_0 \in \Omega$.  Let $\psi_0 = \psi(z_0)$
  and solve the ODE
  \begin{align*}
    \dot z(s) = \frac{\nabla \psi}{\abs{\nabla \psi}^2}(z(s)) ,
    \qquad z(0) = z_0,
  \end{align*}
  where the denominator $\abs{\nabla \psi}^2 = u^2+v^2 \ne 0$. We
  easily check that $\frac d{ds} \psi(z(s)) = 1$, so in particular  $
  \psi(z(-\psi_0))=0$ and $\psi(z(m-\psi_0)) = m$. That is,
  $z(-\psi_0)$ lies on the free surface while $z(m-\psi_0)$ lies on
  the bed. Thus we must have $\Gamma(z(-\psi_0)) = 0$.  Hence
  \begin{align*}
    \Gamma(z_0) &= \Gamma(z(0)) 
    = \Gamma(z(-\psi_0)) + \int_{-\psi_0}^0 \frac d{ds} \Gamma(z(s))\, ds 
    = \int_{-\psi_0}^0 \nabla \Gamma 
    \cdot \frac{\nabla \psi}{\abs{\nabla \psi}^2}(z(s))\, ds \\ 
    &=  \int_{-\psi_0}^0 
    \frac{\omega v\psi_x -\omega u\psi_y}
    {\abs{\nabla \psi}^2}(z(s))\, ds 
    = -\int_{-\psi_0}^0 \omega(z(s))\, ds.
  \end{align*}
  Thus $\Gamma(z_0) \ge 0$ if $\omega \le 0$, while $\Gamma(z_0) \le 0$
  if $\omega \ge 0$.
  
  Now let $z^*=(x^*,\eta(x^*))$ be a trough, so that $\Gamma(z^*)=0$.
  Suppose $\omega \ge 0$ in $\Omega$, so that $\Gamma \le 0$.  Then at
  any point $(x,y)\in \overline\Omega$ we have from Bernoulli's law
  that
  \begin{align*}
    (u^2 + v^2)(x,y) 
    &\le (u^2 + v^2 - 2\Gamma(-\psi))(x,y)  
      = C-2P-2gy    \\
     & \le C-2\patm -2g\eta(x^*) 
   \le (u^2+v^2 - 2\Gamma(-\psi))(z^*)  
    = (u^2+v^2)(z^*)   
  \end{align*}
  since $\Gamma(-\psi(0))=\Gamma(0)=0$.  Similarly, if $z^*$ is a
  crest and  $\omega \le 0$ in $\Omega$, then $\Gamma \ge 0$, so that
  at any point $(x,y)\in \overline\Omega$ we have
  \begin{gather*}
    (u^2 + v^2)(x,y) 
    \ge (u^2 + v^2 - 2\Gamma(-\psi))(x,y)  
    \ge (u^2+v^2 - 2\Gamma(-\psi))(z^*) 
    = (u^2+v^2)(z^*).\qedhere 
  \end{gather*}
\end{proof}

\begin{proposition}[Overturning periodic waves must have a pressure sink] 
  \label{prop:sink}
  Consider a periodic wave as above and suppose that $u^2+v^2 \ne 0$
  on the free surface. If the wave overturns, meaning that $u$ takes
  both positive and negative values along the free surface $S$, then
  there is a point on the free surface where $\pa P/\pa n > 0$.  In
  particular, the pressure $P$ achieves its minimum value inside the
  fluid and not on the free surface. Thus by
  Theorem~\ref{thm:pressure}\ref{thm:pressure:top} there is either a
  stagnation point in the fluid or a point where $\omega u + g < 0$.
  \begin{proof}
    Since $u^2+v^2 \ne 0$ along $S$, the vector $(v,-u)$ is normal to
    $S$ and either points into the fluid or out of the fluid. Without
    loss of generality we assume below that it points out of the
    fluid, which implies that $u < 0$ at crests and troughs. Also
    since $u^2+v^2 \ne 0$ along $S$, we can think of the free surface
    as being parametrized by a curve $(x,y) = (X(s),Y(s))$, where $X$
    and $Y$ are solutions of the ordinary differential equation $X'(s)
    = u(X(s),Y(s))$, $Y'(s) = v(X(s),Y(s))$.  Since the wave is
    periodic, $X'$ and $Y'$ are periodic functions of $s$ with some
    period $T$. Defining the angle $\theta(s)$ that the free surface
    makes with the horizontal in the usual way, we have
    \begin{align*}
      \cos\theta = \frac{X'}{\sqrt{(X')^2+(Y')^2}},
      \qquad 
      \sin\theta = \frac{Y'}{\sqrt{(X')^2+(Y')^2}}.
    \end{align*}
    Because of the periodicity of $X'$ and $Y'$, we know that
    $\theta(s+T)=\theta(s) + 2\pi m$ for some integer $m$, which must
    be constant by continuity. In fact, since the free surface does
    not self-intersect, this constant $m$ must vanish, so that
    $\theta$ is periodic. This can be seen by considering the tangent
    angle of the simple closed curve formed by drawing a horizontal
    line between two consecutive crests. 
    Differentiating along the free surface, we easily check that 
    \begin{align}
      \label{eqn:notethat}
      X'' = uu_x+vu_y = -P_x,
      \qquad 
      Y'' = uv_x+vv_y = -P_y-g,
      \qquad 
      \theta' = \frac{Y'X''-X'Y''}{(X')^2+(Y')^2}.
    \end{align}

    Now consider a wave which overturns and assume for the sake of
    contradiction that $\pa P/\pa n \le 0$ along the whole free
    surface. Since the normal vector $(v,-u)$ on $S$ points out of the
    fluid, we must have 
    \begin{align*}
      0 
      &\ge vP_x-uP_y 
      = -Y'X'' + X'Y''+gX'\\
      &= -((X')^2+(Y')^2) \theta' + g\sqrt{(X')^2+(Y')^2} \cos \theta
    \end{align*}
    all along the free surface by \eqref{eqn:notethat}. Rearranging,
    we find
    \begin{align}
      \label{eqn:rhslater}
      \theta' \ge g\frac{\cos\theta}{\sqrt{(X')^2+(Y')^2}}.
    \end{align}
    In particular, along $S$ we have 
    \begin{align}
      \label{eqn:cospos} 
      \theta'(s) > 0 \text{ whenever } \cos\theta(s) > 0.
    \end{align}
    Because the wave overturns, there exists $s_0$ for which
    $u(X(s_0),Y(s_0))>0$ and hence $\cos\theta(s_0)>0$. Define
    \begin{align*}
      s_1 = \inf\{s > s_0 : \theta(s) = \theta(s_0) \}.
    \end{align*}
    By periodicity, $s_1 \le s_0 + T$. Since $\cos\theta(s_1) =
    \cos\theta(s_0) > 0$, \eqref{eqn:cospos} implies
    \begin{align*}
      \theta'(s_0) >0, \quad 
      \theta'(s_1) >0, 
    \end{align*}
    so that $s_1 > s_0$. Thus $\theta(s)$ is strictly increasing near
    both $s_0$ and $s_1$, so there must be another  root of the
    equation $\theta(s)=\theta(s_0)$ between $s_0$ and $s_1$.  This
    contradicts the definition of $s_1$.  
  \end{proof}
\end{proposition}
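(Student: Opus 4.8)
The plan is to argue by contradiction: assume $\pa P/\pa n \le 0$ at \emph{every} point of the free surface $S$ and derive a contradiction with the fact that $S$ is a non-self-intersecting periodic curve. First I would parametrize $S$ by the flow of the velocity field itself, writing $S$ as $(X(s),Y(s))$ with $X' = u$ and $Y' = v$; this is legitimate because $(u,v)$ is tangent to $S$ and $u^2+v^2\ne0$ there, and periodicity of the wave makes $X',Y'$ periodic in $s$ with some period $T$. Writing $\theta(s)$ for the angle the tangent makes with the horizontal, so that $\cos\theta = X'/\sqrt{(X')^2+(Y')^2}$ and $\sin\theta = Y'/\sqrt{(X')^2+(Y')^2}$, I would differentiate along $S$ and use the Euler equations \eqref{eqn:ww:u}--\eqref{eqn:ww:v} to obtain $X'' = uu_x + vu_y = -P_x$ and $Y'' = uv_x+vv_y = -P_y - g$. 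The outward normal being proportional to $(v,-u)$, the standing assumption $\pa P/\pa n \le 0$ reads $vP_x - uP_y \le 0$; substituting the expressions for $P_x,P_y$ and using $\theta' = (Y'X''-X'Y'')/((X')^2+(Y')^2)$, this rearranges to the pointwise inequality
\[
  \theta' \ge \frac{g\cos\theta}{\sqrt{(X')^2+(Y')^2}}
  \qquad\text{on } S.
\]
The key consequence is that $\theta' > 0$ wherever $\cos\theta > 0$.

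The heart of the argument is then a purely geometric contradiction. Since the wave overturns, $u=X'$ is positive somewhere on $S$, so there is a parameter $s_0$ with $\cos\theta(s_0) > 0$. The crucial structural input is that $\theta$ is \emph{periodic}, i.e.\ it has zero net winding over one period $T$: a priori $\theta(s+T) = \theta(s) + 2\pi m$ for some integer $m$, but because $S$ does not self-intersect, closing off one period of $S$ by a horizontal segment joining two consecutive crests yields a simple closed curve, and the turning number of a simple closed curve forces $m = 0$. Granting this, I would set $s_1 = \inf\{s > s_0 : \theta(s) = \theta(s_0)\}$, which exists with $s_1 \le s_0 + T$ by periodicity. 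Since $\cos\theta(s_1) = \cos\theta(s_0) > 0$, the inequality above gives $\theta'(s_0) > 0$ and $\theta'(s_1) > 0$; hence $\theta$ strictly exceeds $\theta(s_0)$ just after $s_0$ and is strictly below $\theta(s_0)$ just before $s_1$, forcing a root of $\theta(s) = \theta(s_0)$ strictly between $s_0$ and $s_1$ and contradicting the definition of $s_1$. This establishes the existence of a point on $S$ with $\pa P/\pa n > 0$.

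Finally I would read off the remaining conclusions. Because $P \equiv \patm$ is constant on $S$, a boundary minimum of $P$ on $S$ would force $\pa P/\pa n \le 0$ there by the Hopf boundary-point principle; the point just found with $\pa P/\pa n > 0$ rules this out, so $P$ attains its minimum in the interior, where $P < \patm$. If both $u^2+v^2\ne0$ throughout the fluid and $\omega u + g \ge 0$ there, then Theorem~\ref{thm:pressure}\ref{thm:pressure:top} would give $P \ge \patm$ with equality only on $S$, contradicting the interior minimum; hence either $u^2+v^2$ vanishes somewhere (a stagnation point) or $\omega u + g < 0$ at some point. I expect the main obstacle to be the topological step that $\theta$ has zero winding number: it is geometrically intuitive from non-self-intersection but needs an Umlaufsatz-type argument via the closed-up simple curve, and one must also fix orientations at the outset so that $(v,-u)$ consistently points out of $\Omega$ and $u < 0$ at crests and troughs.
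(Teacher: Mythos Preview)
Your proposal is correct and follows essentially the same approach as the paper: parametrize $S$ by the velocity field, derive the inequality $\theta' \ge g\cos\theta/\sqrt{(X')^2+(Y')^2}$ from $\pa P/\pa n \le 0$, use non-self-intersection (via the closed-up curve through consecutive crests) to force zero winding of $\theta$, and obtain the contradiction from the first return time $s_1$. The only minor remark is that in your final paragraph the conclusion that the minimum of $P$ lies in the interior follows immediately (without Hopf) from the fact that $\pa P/\pa n > 0$ at some boundary point where $P=\patm$, since this forces $P<\patm$ at nearby interior points.
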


\subsection{Solitary case} \label{sec-pressure:solitary}

As above, we assume only that the free surface is a $C^2$ curve which
does not self-intersect and  along which $y>-d$, and let $\Omega$ be
the region between $S$ and $B=\{y=-d\}$. We require that the
equations \eqref{eqn:ww} hold in $\Omega$, with the kinematic boundary
condition \eqref{eqn:ww:top} replaced by the condition that $(u,v)$ is
tangent to $S$. Instead of horizontal periodicity, we require that $S$
is a graph $y=\eta(x)$ for $\abs x$ sufficiently large and that 
\begin{align}
  \label{eqn:asym:weak}
  \eta \to 0, \quad P-\patm+gy \to 0 \qquad \textup{ as } x \to \pm\infty,
\end{align}
uniformly in $y$. The asymptotic condition \eqref{eqn:asym:weak} is
implied by \eqref{eqn:asym}. As for regularity, we assume that $u,v,P
\in C^1_\bdd(\overline{\Omega})$ while $P \in
C^2_\bdd(\overline\Omega)$.

We define 
\begin{align*}
  \mx\eta = \sup_S y, \qquad \mn\eta= \inf_S y.
\end{align*}
Any point on $S$ for which $y=  \mx\eta$ is called a {\it crest},
while any point on $S$ for which $y=\mn\eta$ is called a {\it trough}.
If $\mx\eta$ is achieved at $(\pm\infty,0)$, we also call
$(\pm\infty,0)$ a ``crest".  If $\mn\eta$ is achieved at
$(\pm\infty,0)$, we also call $(\pm\infty,0)$ a ``trough".

As before, incompressibility allows us to define a stream function
$\psi$, which by the boundary conditions can be taken to vanish on the
free surface $S$ and to be equal to some other constant $m$, not
necessarily positive, on the bed $B$.

\begin{theorem}\label{thm:pressure:sol}
  Consider any nontrivial solution to \eqref{eqn:ww} in $\Omega$ as
  above, with $u^2 +v^2 \ge \delta > 0$ for some constant $\delta$.
  Then
  \begin{enumerate}[label=\textup{(\alph*)}]
  \item \label{thm:pressure:sol:basic} 
    The pressure satisfies 
    \begin{align}
      \label{eqn:pressure:sol:basic}
      g\mn \eta \le P-\patm+gy \le g\mx\eta,
    \end{align}
    with equality only at finite crests or troughs (and in the limit
    as $x \to \pm\infty$).
  \item \label{thm:pressure:sol:curvature} 
    The free surface is strictly concave at any finite crest and
    strictly convex at any finite trough.   
  \item \label{thm:pressure:sol:bed}
    $\mx \eta - \mn \eta > \frac 1g (\sup_B P - \inf_B P)$.
  \item \label{thm:pressure:sol:below}
    $P > \patm$ at all depths below the troughs.
  \item \label{thm:pressure:sol:top} 
    If $\omega u + g \ge 0$, then $P \ge \patm$ with equality only on
    the free surface $S$, on which $\pa P/\pa n <0$. 
  \item \label{thm:pressure:sol:moretop} 
    Suppose that $\mx\omega (u^2+v^2) -4gu \ge 0$ and, in addition to
    \eqref{eqn:asym:weak}, we have 
    \begin{align}
      \label{eqn:asym:lessweak}
      P_y \to -g,
      \quad 
      v \to 0
      \qquad \textup{ as } x \to \pm\infty,
    \end{align}
    uniformly in $y$. Then $P + \frac 12 \mx \omega \psi \ge \patm$
    with equality only on the free surface. For instance, this is true
    if $\mx\omega > 0$, $u \le 0$, and \eqref{eqn:asym} holds.
  \item \label{thm:pressure:sol:pos} 
    If $\omega \ge 0$ in the fluid while $u < 0$ for $\abs x$
    sufficiently large, then the relative speed $\sqrt{u^2+v^2}$ in
    the fluid is maximized at all troughs.  
  \item \label{thm:pressure:sol:neg} 
    If $\omega \le 0$ in the fluid while $u<0$ for $\abs x$
    sufficiently large, then the relative speed $\sqrt{u^2+v^2}$ in
    the fluid is minimized at all crests.
  \end{enumerate}
\end{theorem}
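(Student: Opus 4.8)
The plan is to transcribe the proof of the periodic Theorem~\ref{thm:pressure} almost verbatim, since every auxiliary function used there (the function $f=P+gy$, the pressure $P$ itself, the function $f=P+\frac12\mx\omega\psi$, and the Bernoulli function $\Gamma$) satisfies an elliptic identity that is purely algebraic in the Euler equations \eqref{eqn:ww} and therefore holds regardless of whether the domain is periodic or solitary. The hypothesis $u^2+v^2\ge\delta>0$ makes each of these equations \emph{uniformly} elliptic on all of $\Omega$, which is exactly what is needed to run the maximum principle on an unbounded domain. The only genuinely new work is to control each auxiliary function as $x\to\pm\infty$ and to incorporate the horizontal ends into the maximum-principle argument.

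For part~\ref{thm:pressure:sol:basic} I would set $f=P+gy$, which satisfies the identity \eqref{P+gy eq}, and note that \eqref{eqn:asym:weak} says precisely that $f-\patm\to0$ uniformly as $x\to\pm\infty$. I would apply the maximum principle on the truncated domain $\Omega\cap\{\abs x<R\}$, whose boundary is the free surface, the bed, and the two vertical cuts $\{x=\pm R\}$: on the bed $f_y=P_y+g=0$, so by the Hopf lemma no extremum lies there; on the free surface $f=\patm+gy$; and on the cuts $f\to\patm$ as $R\to\infty$. Sending $R\to\infty$, any interior value exceeding $\max(g\mx\eta,\patm)$ or below $\min(g\mn\eta,\patm)$ would survive the limit and contradict the maximum principle, yielding \eqref{eqn:pressure:sol:basic} with equality only at finite crests or troughs or in the limit. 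Parts~\ref{thm:pressure:sol:curvature}, \ref{thm:pressure:sol:bed}, and~\ref{thm:pressure:sol:below} then follow from~\ref{thm:pressure:sol:basic} exactly as before: (b) from the Hopf lemma at a finite crest or trough, (c) from evaluating the inequalities at the bed points where $P$ is largest and smallest, and (d) from the lower bound. Part~\ref{thm:pressure:sol:top} is identical in structure: $P$ satisfies an elliptic equation with right-hand side $-2g(\omega u+g)\le0$, and by \eqref{eqn:asym:weak} one has $P-\patm+gy\to0$, so $P\ge\patm$ in the limit on the vertical cuts since $y\le0$ there; the same truncation argument places the minimum of $P$ on the free surface, whereupon the Hopf lemma gives $\pa P/\pa n<0$.

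Part~\ref{thm:pressure:sol:moretop} is the one place where \eqref{eqn:asym:weak} alone does not suffice, which is exactly why the stronger asymptotic hypothesis \eqref{eqn:asym:lessweak} is imposed: to control $f=P+\frac12\mx\omega\psi$ at infinity I need the asymptotics of $\psi$, and $P_y\to-g$ together with $v\to0$ provide both the analog of the bed computation and the limit $f\to\patm$ along the surface at infinity. With that in hand, the elliptic inequality and truncation argument of the periodic proof apply unchanged. Finally, parts~\ref{thm:pressure:sol:pos} and~\ref{thm:pressure:sol:neg} need no maximum principle on $\Omega$ at all: the Bernoulli function $\Gamma$ is constant along streamlines, and the flow $\dot z=\nabla\psi/\abs{\nabla\psi}^2$ transports any interior point to the free surface, giving $\Gamma(z_0)=-\int\omega\,ds$ with the stated sign. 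Here the uniform bound $u^2+v^2\ge\delta>0$ (rather than merely $\ne0$) guarantees that this flow is well defined and reaches the surface, and the speed comparison via Bernoulli's law is then identical to the periodic case.

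The main obstacle is the rigorous justification of the maximum principle on the unbounded domain $\Omega$, where without decay control a Phragm\'en--Lindel\"of obstruction could in principle defeat the argument. The resolution I would carry out is the truncation-and-limit scheme above: apply the classical maximum principle on $\Omega\cap\{\abs x<R\}$, use uniform ellipticity from $u^2+v^2\ge\delta$ together with the uniform limits \eqref{eqn:asym:weak} (and \eqref{eqn:asym:lessweak} for part~\ref{thm:pressure:sol:moretop}) to show that the contribution of the vertical cuts $\{x=\pm R\}$ converges to the appropriate target constant, and let $R\to\infty$. Everything else is a direct transcription of the periodic argument.
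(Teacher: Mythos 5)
Your overall scheme---transcribe the periodic proof and handle the unbounded ends by applying the maximum principle on the truncation $\Omega\cap\{\abs x<R\}$, using the uniform asymptotics to control the vertical cuts, and letting $R\to\infty$---is exactly the paper's proof made explicit: the paper phrases the same thing as ``the extrema can only be achieved on the free surface or in the limit as $x\to\pm\infty$.'' Your treatment of parts (a), (b), (d), (e) is sound (note only that on the cuts $y$ ranges up to $\eta(\pm R)$, which may be slightly positive, but this vanishes in the limit), and you correctly identify the role of \eqref{eqn:asym:lessweak} in part (f): it yields $f_y\to -g<0$ uniformly, so an infimum of $f=P+\frac12\mx\omega\psi$ occurring at infinity must occur at $(\pm\infty,0)$, where $f=\patm$. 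One bookkeeping slip in (c): the ``bed points where $P$ is largest and smallest'' need not exist on the unbounded bed, so you must work with $\sup_B P$ and $\inf_B P$; strictness then requires the paper's extra observation that since $P\to\patm+gd$ on $B$ as $x\to\pm\infty$, the sup and inf cannot both be attained only at infinity (in which case both would equal $\patm+gd$ and the inequality is immediate from $\mx\eta>\mn\eta$ for a nontrivial wave), while at a finite extremal bed point the strict form of (a) applies.

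The genuine gap is in parts (g) and (h). You claim these ``need no maximum principle on $\Omega$ at all'' and that $u^2+v^2\ge\delta$ alone guarantees the flow $\dot z=\nabla\psi/\abs{\nabla\psi}^2$ carries any interior point to the free surface ``with the stated sign.'' But the sign of $\Gamma(z_0)=-\int_{-\psi_0}^0\omega(z(s))\,ds$---and hence the direction of the speed comparison---depends on knowing $\psi_0=\psi(z_0)\ge 0$, i.e.\ that $0\le\psi\le m$ with $m>0$, so that $\psi$ is minimized on $S$ (where it vanishes) and maximized on $B$. Establishing this orientation is precisely where the hypothesis ``$u<0$ for $\abs x$ sufficiently large'' enters: $\psi_y=u<0$ on vertical segments near infinity forces $m>0$ and $0\le\psi\le m$ there, and the no-critical-point (boundary-extremum) argument---itself a maximum-principle step---propagates this to all of $\Omega$. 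Your sketch never invokes this hypothesis; if instead $u>0$ near infinity, one gets $m<0$ and $\psi\le 0$, the sign of $\psi_0$ flips, and the conclusions of (g) and (h) interchange, so the step as written would fail. The bound $\delta$ buys only boundedness of the flow speed ($\abs{\dot z}\le\delta^{-1/2}$) and well-posedness, not the landing point or the sign. Finally, ``identical to the periodic case'' misses that for a solitary wave of elevation the trough occurs only at $(\pm\infty,0)$ under the paper's extended definition, so the Bernoulli comparison at $z^*$ must be taken as a limit, as the paper notes with ``If $z^*=(\pm\infty,0)$, we simply take limits.''
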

\begin{proof}
  The proof is almost identical to the periodic case so we only
  indicate the differences.

  \ref{thm:pressure:sol:basic} By assumption, $f = P+gy \to \patm$ as
  $x\to\pm\infty$, uniformly in $y$. The Hopf lemma implies that the
  supremum and infimum of $f$ can only be achieved on the free surface
  or in the limit as $x \to \pm\infty$. If both are achieved on the
  free surface then the argument proceeds exactly as before, so
  suppose that the infimum of $f$ is achieved as $x \to \pm\infty$ but
  not at any finite point on the free surface. Then $f > \patm$ at all
  points in the fluid domain and along the free surface. Restricting
  $f$ to the free surface we see that $\mn\eta=0$ is also achieved as
  $x \to \pm\infty$. In particular, $(\pm\infty,0)$ is a trough by our
  above definition and the inequality $f > \patm$ can be rewritten as
  $P-\patm + gy > g\mn\eta$. Similarly, if the supremum of $f$ is
  achieved as $x \to \pm\infty$ but not at any point along the free
  surface then we must have $P-\patm + gy < g\mn\eta$.

  \ref{thm:pressure:sol:below} 
  No change.

  \ref{thm:pressure:sol:bed}
  Replace max and min by sup and inf. The inequality is still strict
  since by \eqref{eqn:asym:weak} the sup and inf of the restriction of
  $P$ to the bed $B$ cannot both be achieved as $x \to \pm\infty$.

  \ref{thm:pressure:sol:curvature}
  Consider any finite crest or trough.    

  \ref{thm:pressure:sol:top}  
  Replacing min by inf, suppose that the infimum of $P$ is achieved as
  $x \to \pm\infty$. Then $P + gy \to \patm$ as $x \to\pm\infty$
  implies $\inf P = \patm$ as before.

  \ref{thm:pressure:sol:moretop}  
  Thanks to \eqref{eqn:asym:lessweak}, we have 
  \begin{align*}
    f_y = P_y + \frac 12 \mx\omega v \to -g < 0 
  \end{align*}
  as $x \to \pm\infty$, uniformly in $y$. Thus if the infimum of $f$
  is achieved as $x \to \pm\infty$, it must be achieved at
  $(\pm\infty,0)$ where $f$ takes the value $\patm$.

  \ref{thm:pressure:sol:pos} and  \ref{thm:pressure:sol:neg}. 
  Since $\psi$ has no critical points and $\psi_y = u < 0$ for $\abs
  x$ sufficiently large, we find as before that $\psi$ is minimized
  along the free surface where it vanishes and maximized along the bed
  where its value is $m > 0$. If $z^*=(\pm\infty,0)$, we simply take
  limits.  
\end{proof}

\begin{proposition}[Overturning solitary waves must have a pressure sink] 
\label{prop:sink:sol}
  Consider a solitary wave as above and suppose that $u^2+v^2 \ge
  \delta > 0$ on the free surface for some constant $\delta$, and  $\eta_x
  \to 0$ as $x \to \pm\infty$. If the wave overturns, meaning that $u$
  takes both positive and negative values along the free surface $S$,
  then there is a point on the free surface where $\pa P/\pa n > 0$.
  In particular, the pressure $P$ achieves its minimum value inside
  the fluid and not on the free surface.
  \begin{proof}
    Again, the proof is almost identical to the periodic case so we
    only indicate the differences. 
    Assuming as before that $(v,-u)$ points out of the fluid, we have 
    $e^{i\theta(s)} \to -1$ as $s \to \pm\infty$.
    Without loss of generality, we may assume that $\theta(s) \to \pi$
    as $s \to +\infty$. To see that $\theta(s) \to \pi$ as $s \to
    -\infty$ as well, we consider the tangent angle of the oriented
    boundary of $\Omega \cap \{ \abs x < M \}$ for $M$ sufficiently
    large.

    Now consider a wave which overturns and assume for the sake of
    contradiction that $\pa P/\pa n \le 0$ along the whole free
    surface. As before we find
    \begin{align}
      \theta'(s) > 0 \text{ whenever } \cos\theta(s) > 0.
    \end{align}
    Since $\theta \to \pi$ as $s \to \pm\infty$, there must exist
    values $s_0<s_2$ for which $\theta(s_0) = \theta(s_2)$ and
    $u(X(s_0),Y(s_0)) > 0$ and hence
    $\cos\theta(s_0)=\cos\theta(s_2)>0$.  Let
    \begin{align*}
      s_1 = \inf\{s > s_0 : \theta(s) = \theta(s_0) \}.
    \end{align*}
    By construction, $s_1 < s_2$, so in particular $s_1$ is finite,
    and we reach a contradiction exactly as in the periodic case.
  \end{proof}
\end{proposition}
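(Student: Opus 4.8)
The plan is to reduce the solitary case to the periodic argument of Proposition~\ref{prop:sink}, replacing the role of periodicity by the asymptotic flatness of $S$. First I would parametrize the free surface by the flow of the (relative) velocity field, solving $X'(s)=u(X(s),Y(s))$ and $Y'(s)=v(X(s),Y(s))$; the hypothesis $u^2+v^2\ge\delta>0$ makes this a nondegenerate global parametrization. Writing the tangent angle $\theta(s)$ via $\cos\theta=X'/\sqrt{(X')^2+(Y')^2}$ and $\sin\theta=Y'/\sqrt{(X')^2+(Y')^2}$, the outward normal is proportional to $(v,-u)$, and I may assume it points out of $\Omega$. Since $\eta_x\to0$ as $x\to\pm\infty$, this forces $u<0$ there, so that $e^{i\theta(s)}\to-1$; thus $\theta(s)\to\pi$ modulo $2\pi$ at both ends, and after a harmless relabeling I would arrange $\theta(s)\to\pi$ as $s\to+\infty$.

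The second, purely local step is identical to the periodic case. Assuming for contradiction that $\pa P/\pa n\le0$ everywhere on $S$, I would differentiate along the surface using $X''=uu_x+vu_y=-P_x$, $Y''=uv_x+vv_y=-P_y-g$, and $\theta'=(Y'X''-X'Y'')/((X')^2+(Y')^2)$ to obtain
\begin{align*}
  0\ge vP_x-uP_y=-((X')^2+(Y')^2)\theta'+g\sqrt{(X')^2+(Y')^2}\cos\theta,
\end{align*}
and hence the crucial sign condition
\begin{align*}
  \theta'(s)>0\quad\text{whenever}\quad\cos\theta(s)>0.
\end{align*}

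The only genuine difference from the periodic proof is the global topology that produces the contradiction, and this is where I expect the real work to lie. In place of exact periodicity I would use that $\theta\to\pi$ at \emph{both} ends; the delicate point is to rule out any net winding, i.e.\ to show the common limit is the same $\pi$ (not $\pi+2\pi k$) as $s\to-\infty$. I would establish this by computing the total turning of the tangent angle along the oriented boundary of the contractible region $\Omega\cap\{\abs x<M\}$ for large $M$, which must be $2\pi$. Granting this, overturning supplies some $s_0$ with $u>0$, so $\cos\theta(s_0)>0$, and since $\theta\to\pi$ at both ends the value $\theta(s_0)$ is reattained at some $s_2>s_0$ with $\cos\theta(s_2)=\cos\theta(s_0)>0$. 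Setting $s_1=\inf\{s>s_0:\theta(s)=\theta(s_0)\}\le s_2<\infty$, the sign condition gives $\theta'(s_0)>0$ and $\theta'(s_1)>0$ (so in particular $s_1>s_0$), whence $\theta$ is strictly increasing through both $s_0$ and $s_1$; this forces a root of $\theta(s)=\theta(s_0)$ strictly between $s_0$ and $s_1$ and contradicts the definition of $s_1$. The contradiction yields a point with $\pa P/\pa n>0$; since $P\equiv\patm$ on $S$, the Hopf lemma then rules out the free surface as a location of the minimum of $P$, giving the final assertion exactly as in the periodic case.
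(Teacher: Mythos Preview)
Your proposal is correct and follows essentially the same route as the paper: parametrize $S$ by the velocity flow, derive the sign condition $\theta'>0$ when $\cos\theta>0$ from $\pa P/\pa n\le 0$, use the total turning of the boundary of $\Omega\cap\{\abs x<M\}$ to pin down $\theta\to\pi$ at both ends without winding, and then run the same $s_0,s_1,s_2$ contradiction as in the periodic case. The only cosmetic remark is that the final sentence does not need the Hopf lemma (and indeed the equation for $P$ may degenerate at interior stagnation points, which are not excluded here): once $\pa P/\pa n>0$ at some point of $S$ and $P\equiv\patm$ on $S$, there are interior points with $P<\patm$, so the minimum cannot lie on $S$.
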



\end{document}